\documentclass{amsart}

\usepackage{hyperref}

\usepackage{esint}
\usepackage{amssymb}
\usepackage{tikz}
\usepackage{graphicx}
\usepackage{amsrefs,enumitem}

\usepackage{fancyhdr}
\usepackage{time}
 

\fancyhead{}
\fancyhead[CO, CE]{\texttt{-Manuscript Draft-}}
\fancyhead[RO, RE]{\texttt{\today}}
\setlength{\headheight}{2\baselineskip}


\newtheorem{theorem}{Theorem}
\newtheorem{lemma}[theorem]{Lemma}
\newtheorem{corollary}[theorem]{Corollary}
\newtheorem{proposition}[theorem]{Proposition}
\theoremstyle{definition}

\newcommand{\eref}[1]{(\ref{e.#1})}
\newcommand{\tref}[1]{Theorem \ref{t.#1}}
\newcommand{\lref}[1]{Lemma \ref{l.#1}}

\newcommand{\cref}[1]{Corollary \ref{c.#1}}

\newcommand{\sref}[1]{Section \ref{s.#1}}


\newcommand{\Z}{\mathbb{Z}}
\newcommand{\R}{\mathbb{R}}

\newcommand{\E}{\mathbb{E}}

\newcommand{\grad}{\nabla}

\def\XXint#1#2#3{{\setbox0=\hbox{$#1{#2#3}{\int}$ }
\vcenter{\hbox{$#2#3$ }}\kern-.6\wd0}}

\newcommand{\ep}{\varepsilon}

\renewcommand{\P}{\mathbb{P}}

\begin{document}

\title{Recovering coercivity for the G-equation in general random media}

\author{William M Feldman}
\address{Institute for Advanced Study, 1 Einstein Dr, Princeton, NJ 08540}
\email{wfeldman@math.ias.edu}
\maketitle

\begin{abstract}
The G-equation is a popular model for premixed turbulent combustion.  Mathematically it has attracted a lot of interest in part because it is a simple example of a Hamilton-Jacobi equation which is only coercive `on average'.  This paper shows that, after an almost surely finite waiting time, coercivity is recovered for the G-equation in a small mean, incompressible, space-time stationary ergodic velocity field.  The argument follows ideas from recent work of Burago, Ivanov and Novikov \cite{BuragoIvanovNovikov0,BuragoIvanovNovikov}, while significantly weakening the assumption on the velocity field.  The waiting time is explicitly characterized in terms of the space-time means of the velocity field and so mixing estimates on the waiting time can easily be derived. Examples are provided.
\end{abstract}

\section{Introduction}

The G-equation is
\begin{equation}\label{e.geqn}
 u_t = A|\grad u| + V(t,x) \cdot \grad u \ \hbox{ in } \ \R^d \times [t_0,\infty) \  \hbox{ with } \ u(x,t_0) = u_0(x)
 \end{equation}
where $V$ is a space-time stationary ergodic random field on $\R_t \times \R^d_x $ that is Lipschitz continuous with $|V| \leq M$,
\[ \grad \cdot V(t,x) = 0, \ \hbox{ and } \ |\E[V](t,x)| <A.\]
This is a simple model for premixed turbulent combustion. In this interpretation, the super-level sets of $u$ are ``burnt regions" and the sub-levels are ``unburnt regions", $V$ models a turbulent fluid flow, and $A$ is the laminar flame speed.  Usually $u$ is called $G$ in the applied literature, which explains the name of the equation.

In mathematical terms this is a (geometric) Hamilton-Jacobi equation with convex Hamiltonian $H(p,t,x) = A|p| + V(t,x) \cdot p$.  The difficulty of the problem comes from the lack of coercivity: it may be that $M \gg A$.  The key consequences of coercivity are Lipschitz estimates (in the time independent case) and reachability estimates for controlled trajectories (in general).  These estimates, derived from coercivity, play a fundamental role in homogenization results for Hamilton-Jacobi equations, but they are not present for the G-equation.  Nonetheless, the formal intuition is that the Hamiltonian associated with the G-equation is ``coercive on average" since $\E H(p,t,x) = |p| + \E[V] \cdot p$ is coercive.  Of course, one cannot just take expectations on both sides of \eref{geqn} and hope to derive something since $V$ and $\grad u$ are not independent.  Nonetheless, as we will show here, the primary consequences of coercivity (Lipschitz/reachability estimates) are indeed recovered at the length/time scale $T(t,x)$ (a stationary random field) where the space-time averages of $V$ centered at $(t,x)$ become less than $A$.

We put the above in more precise terms.  The G-equation \eref{geqn} has a natural control interpretation with trajectories
\[ \dot{X}_t = V(t,X_t) + \alpha_t \ \hbox{ with any measurable control } \ |\alpha_t| \leq A.\]
It turns out that
\begin{equation}\label{e.controlformula}
 u(t,x) = \sup_{x\in R_t(t_0,x_0)} u_0(t_0,x_0),
 \end{equation}
where $R_t$ is called the reachable set, defined for $t \in \R$,
\[ R_{t}(t_0,x_0) = \left\{ x \in \R^d: 
\begin{array}{c}
\hbox{ there exists a controlled trajectory $X$ on $[t_0,t]$}\\
\hbox{with $X_{t_0} = x_0$ and $X_t = x$}
\end{array}\right\}. \]
 The reachable set from a given space-time point is the main object of interest in this study.  The indicator function ${\bf 1}_{R_t(t_0,x_0)}(t,x)$ is a special solution of \eref{geqn}, in PDE language it is like a nonlinear version of a fundamental solution.

We say that there is a finite waiting time if there is a stationary random field $T: \R^d \times \R \to [0,\infty)$  that is finite almost surely and for which the following delayed coercivity condition holds: there exists $c>0$ universal such that,
\begin{equation}\label{e.waitcoercivity}
 B_{c(A-|\E[V]|)|t-t_0|}(x) \subset R_{t}(t_0,x_0) \ \hbox{ for all } \ |t -t_0|\geq T(t_0,x_0). 
 \end{equation}
In the time independent case $V(t,x) = V(x)$, by some simple manipulations of the control formula \eref{controlformula} using \eref{waitcoercivity}, it follows that solutions of the G-equation are Lipschitz continuous at length/time scales larger than the waiting time
\[ |u(t,x) - u(y,s)| \leq \|\grad u_0\|_\infty[2MT(x)\vee T(y) +  |x-y|+M|t-s|].\]
Thus, this can be thought of as a large scale regularity property. In general such results play an important role in quantitative homogenization theory.  The waiting time estimate \eref{waitcoercivity} has been proved previously in space-time periodic \cite{XinYu,CardaliaguetNolenSouganidis},  stationary ergodic (time independent) \cite{NolenNovikov,CardaliaguetSouganidis}.  Recently, Burago, Ivanov and Novikov \cite{BuragoIvanovNovikov} proved \eref{waitcoercivity} in space-time uniformly ergodic environments, a class which at least includes periodic, almost periodic, and some finite range dependence random velocity fields with special structure.  We give the first proof of \eref{waitcoercivity} in the most general setting for homogenization theory, space-time stationary ergodic random environments, building on the new ideas of \cite{BuragoIvanovNovikov}.  This also gives a new proof of finite waiting time in time independent media, which was proved some time ago by Cardaliaguet and Souganidis \cite{CardaliaguetSouganidis}.

In what follows we make some simplifications and consider only the case $A = 1$ and $\E[V] = 0$.  The general case presented above can be recovered by a rescaling of the time variable and ``trading" some of the control to make $\E[V] = 0$.

The starting point to understanding the spreading of the reachability set $R_\tau$ follows from the divergence free condition and the isoperimetric inequality.  Integrating \eref{geqn} over $\R^d$, since ${\bf 1}_{R_t}$ itself is a solution of the G-equation,
\[ \frac{d}{dt}|R_t| = \int_{\partial R_t} 1 + V(t,x) \cdot n \ dS =  |\partial R_t| \geq d\omega_d^{1/d}|R_t|^{1-\frac{1}{d}}.\]
Integrating this differential inequality from $t_0$ to $t$ yields
\begin{equation}\label{e.Rdvolgrowth}
 |R_t| \geq \omega_d (t-t_0)^d.
 \end{equation}
In combination with the uniform upper bound $M$ on the speed of trajectories one can obtain
\[ |R_t(t_0,x_0) \cap B_{M(t-t_0)}(x_0)| \geq \omega_d(t-t_0)^d.\]
This estimate, however, contains no information on how the reachable set spreads.

In the below we write
\[ \Box_r(x) = x+(-\tfrac{r}{2},\tfrac{r}{2})^d \]
Let us consider a localization of the reachability set growth estimate on a large open box $\Box_r = \Box_r(x_0)$
\[ \frac{d}{dt} |\Box_r \cap R_t |_d =|\Box_r \cap \partial R_t|_{d-1} - \textup{flux}(V,R_t \cap \partial \Box_r).  \]
If the flux term was not present the relative isoperimetric inequality would allow to show that $\Box_r$ is completely filled by $R_t$ in time proportional to $r$.  Thus the issue lies with the flux through $R_t \cap \partial \Box_r$.

The clever new arguments introduced by Burago, Ivanov, and Novikov \cite{BuragoIvanovNovikov} show how to control this flux in terms of only the uniform convergence of the spatial averages.  Given $\ep>0$ define
\begin{equation}\label{e.uniformmean}
 r^*_\ep = \sup \left\{ r >0: \ \sup_{x,t} |\frac{1}{|\Box_r|_d}\int_{\Box_r(x)} V(t,x) \ dx| \geq \ep\right\}.
 \end{equation}
There is a universal $\ep_0(d,M)$ such that if $r^*_{\ep_0}<+\infty$ then \cite{BuragoIvanovNovikov} obtain a finite waiting time $T$ (independent of $t,x$) such that \eref{waitcoercivity} holds.  This is similar to, although slightly weaker than, uniform ergodicity, the condition that the ergodic averages converge to the mean uniformly in space-time.  This makes heuristic sense since we imagine that the problem is coercive on average at length scale $r^*_{\ep_0}$.  This condition does hold for periodic, almost periodic, and also on a non-trivial class of finite range of dependence random velocity fields $V$ \cite[Remark 6.5]{BuragoIvanovNovikov}.  Still the condition is fairly restrictive in the context of random media, such uniform space-time convergence of the ergodic averages contained in the condition $r^*_\ep<+\infty$ for small $\ep>0$ will definitely not hold in general even on random velocity fields with good mixing properties.  

In the present paper we greatly generalize the class of velocity fields to which these methods are applicable. We are able to prove \eref{waitcoercivity} in the class of space-time stationary ergodic velocity fields, this is the absolute weakest assumption which is widely used in studying homogenization.  We will only need to control a much weaker quantity than \eref{uniformmean}. First define the space-time boxes
\[ Q_r(t,x) = (t,x) + (-\tfrac{r}{2},  \tfrac{r}{2}) \times \Box_r . \]
Fix $N> 1$ and define the empirical averages
\begin{equation}\label{e.en}
 E_N[V;Q_r] = \sup_{(k,n), \in \Z \times \Z^d , \ |(k,n)|_\infty < N/2}\left|\frac{1}{|Q_{r/N}|}\int_{Q_{r/N}(k\frac{r}{N},n\frac{r}{N})} V(t,x) \ dxdt\right|.
 \end{equation}
Define $E_N[V,Q_r(t,x)]$ analogously for $(t,x) \in \R_t \times \R^d_x$.  From the ergodic theorem and $\E[V] = 0$ the following limit holds on an event of full probability
\[\lim_{r \to \infty} E_N[V;Q_r(t,x)] = 0 \ \hbox{ for all } \ (t,x) \in \R_t \times \R^d_x.\]
Then define
\begin{equation}\label{e.rnequantity}
 r^*_{N,\ep}(t,x) = \sup \left\{ r >0: \  E_N[V;Q_r(t,x)] \geq \ep\right\}.
 \end{equation}
This quantity is stationary in $(t,x)$ and, by the ergodic theorem (see Akcoglu and Krengel \cite{AkcogluKrengel}), it is finite almost surely for every $\ep >0$.

We show that $r^*_{N,\ep}$ controls the waiting time for a sufficiently large, universal, $N$.
\begin{theorem}\label{t.main}
Suppose that $V$ is a space-time stationary ergodic random field, uniformly bounded $\|V\|_\infty \leq M$, and Lipschitz continuous.  Then there are dimensional constants $c(d),C(d)>0$ such that
\[ B_{c|t-t_0|}(x_0) \subset R_{t}(t_0,x_0) \ \hbox{ for all } \ |t -t_0|\geq T(t_0,x_0):= Cr^*_{N,\ep}(t_0,x_0) \]
for $\ep = C^{-1}M^{-(d-1)}$ and $N = \lceil C M^{5d+2}\rceil$.  In particular $T$ is finite almost surely.
\end{theorem}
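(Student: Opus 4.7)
By space-time stationarity of $V$ I normalize $(t_0, x_0) = (0,0)$ and write $r^* := r^*_{N,\ep}(0,0)$. It suffices to prove the single-scale inclusion: there is a dimensional constant $C_0 > 0$ such that
\[
B_{r/2}(0) \subset R_{C_0 r}(0, 0) \quad \text{for every } r \geq r^*,
\]
since applying this with $r = t/C_0$ (for $t \geq C_0 r^*$) recovers the theorem. The new information available at any $r \geq r^*$ is that every space-time sub-box $Q_{r/N}((k,n)r/N)$ with $|(k,n)|_\infty < N/2$ in the partition of $Q_r(0,0)$ has $V$-average of norm less than $\ep$, so a uniform small-scale averaging control of $V$ is in force throughout $Q_r(0,0)$.

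The starting point is the flux-balance identity on $\Box_r(0)$,
\[
\frac{d}{dt}|R_t \cap \Box_r(0)|_d = |\partial R_t \cap \Box_r(0)|_{d-1} - \int_{R_t \cap \partial \Box_r(0)} V \cdot n \, dS,
\]
combined with the relative isoperimetric inequality
\[
|\partial R_t \cap \Box_r(0)|_{d-1} \geq c_d \min(|R_t \cap \Box_r|_d, |\Box_r \setminus R_t|_d)^{(d-1)/d}.
\]
If (after averaging in time over a window of length comparable to $r$) the flux term is bounded by half of this isoperimetric lower bound, then the differential inequality forces $|R_t \cap \Box_r|_d$ first to climb up to $r^d/2$ and then $|\Box_r \setminus R_t|_d$ to decay to zero; each phase takes time $O(r)$, so $\Box_r(0) \subset R_{C_0 r}(0,0)$ and in particular $B_{r/2}(0) \subset R_{C_0 r}(0,0)$.

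The heart of the argument, and the main obstacle, is therefore the flux bound. I would adapt the Burago--Ivanov--Novikov strategy: partition each face of $\partial \Box_r(0)$ into $O(N^{d-1})$ sub-faces of side $r/N$, and on each sub-face write $V = \bar V + (V - \bar V)$ where $\bar V$ is the mean of $V$ over an adjacent space-time sub-box of size $r/N$. Since $|\bar V| < \ep$, the total mean contribution summed over the $2d N^{d-1}$ sub-faces is $O(\ep r^{d-1})$, which is much smaller than the isoperimetric term once $\ep$ is chosen of order $M^{-(d-1)}$. The residue $V - \bar V$ is divergence free with zero sub-box mean, so a combinatorial redistribution of its flux over the remaining faces of the space-time sub-box, combined with the Lipschitz bound $|\nabla V| \leq M$ at the sub-scale $r/N$, controls the residue flux by an oscillation term of order $Mr/N$ per unit $(d-1)$-area; absorbing this into the isoperimetric term requires taking $N$ polynomially large in $M$. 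The delicate point is that small sub-box averages of a divergence-free field do not by themselves control flux through codimension-one surfaces cutting the sub-box, so it is only the combined effect of the BIN redistribution scheme and the Lipschitz regularity of $V$ at the sub-scale that recovers the estimate, and the specific exponents $\ep = C^{-1}M^{-(d-1)}$ and $N = \lceil CM^{5d+2}\rceil$ in the theorem precisely reflect how these mean and oscillation contributions scale with $M$.
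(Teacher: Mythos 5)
Your overall scaffolding is right — the flux-balance ODE on $\Box_r$, the relative isoperimetric inequality, and a three-phase filling argument — and the intuition that small space-time box averages are what must be converted into flux control is exactly the point. But the mechanism you propose for the flux estimate doesn't match what actually works, and it papers over the hardest part of the argument.

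First, the decomposition $V = \bar V + (V-\bar V)$ with a residue controlled by ``the Lipschitz bound $|\nabla V|\leq M$ at sub-scale $r/N$ plus a combinatorial redistribution'' is not the mechanism, and it cannot be. The paper assumes $V$ is Lipschitz only so that trajectories and viscosity solutions are well-defined; no quantitative bound on $\nabla V$ enters the flux estimate, and none could, since the theorem's constants depend only on $d$ and $M=\|V\|_\infty$. Moreover, small averages on a $(d+1)$-dimensional space-time sub-box genuinely do \emph{not} control the flux through a $(d-1)$-dimensional face cutting it — your last paragraph identifies the obstacle but then asserts it is overcome by a device that doesn't exist. What the paper actually does (Lemma~\ref{l.faceflux}) is: work at an intermediate scale $Lr/N$ with $L = \lceil M\ep^{-1}\rceil$ (not at $r/N$ directly); use Fubini and the mean value theorem in the normal direction to find a parallel face $F'$ at some height $h \leq r/(2N)$ where the $J\times F'$ time-space average is $\leq\ep$; and then apply the divergence theorem (in $x$, at each fixed $t$) to transfer the estimate from $F'$ back to $F$, with the lateral boundary error bounded using only $|V|\leq M$ and the smallness of $h$ relative to the face side $Lr/N$. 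The $\ep$- and $M/L$-sized errors must then be balanced by the choice of $L$, which is why $L$ and hence $N$ scale polynomially in $M$.

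Second, you need to control $\text{flux}(V, R_\tau \cap \partial\Box_r)$, not $\text{flux}(V,\partial\Box_r)$, and your sketch never addresses the restriction to $R_\tau$. The paper's Lemma~\ref{l.bdryflux} handles this by the identity $\text{flux}(V,F)=\text{flux}(V,F\cap R_\tau)+\text{flux}(V,F\setminus R_\tau)$ together with the relative isoperimetric inequality on $F\times I$, which produces an extra term proportional to $\text{Per}(R,\partial\Box_r\times I)$. That perimeter term is then tamed in Lemma~\ref{l.volumegrowthode} by a co-area argument averaging over box radii $\rho\in[r,(1+\delta)r]$ to find a good slice; none of this appears in your plan, and without it the differential inequality has an uncontrolled source term. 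Finally, your two-phase filling picture (climb to $r^d/2$, then decay the complement) elides the degeneracy at $|R_{t_0}\cap\Box_r|=0$: the relative isoperimetric term vanishes there, which is why the paper has a separate Step~1 (using the unconstrained isoperimetric inequality while $R_t\subset\Box_r$) to reach a fixed fraction $\alpha|\Box_r|$ before the relative-isoperimetric ODE can be started, and a symmetric Step~3 to finish.
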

Since the dependence of $\ep$ and $N$ on $M$ is quite explicit, this would probably allow to consider unbounded velocity fields with finite moments, at least under some mixing conditions.

This bound also allows to naturally derive mixing estimates and tail bounds on the waiting time if we assume mixing conditions on $V$.   In particular one can get an explicit upper bound (depending on $M$) for the length scale where coercivity first holds with high probability.  We give an example result in this direction.  The statement will be slightly imprecise, since we do not want to explicitly define this mixing condition yet.  When we say a constant is universal below, we mean it depends at most on the dimension and the hidden constants in the mixing condition, see \sref{mixing} for a precise description.
\begin{corollary}\label{c.mixing}
Suppose that $V$ satisfies an $\alpha$-mixing condition with stretched exponential decay with stretching exponent $\beta>0$ and unit length scale.  Then there are universal constants $c,C>0$ such that
\[\P(T(t,x) \geq t) \leq C\exp\left[-c(\tfrac{t}{\ell(M)})^{\beta'}\right]\]
and $T$ is $\alpha$-mixing with stretched exponential decay with exponent $\beta' = \frac{(d+1)\beta}{d+1+\beta}$ and length scale $\ell(M) = CM^{2(d-1)/\beta'}(1+|\log M|)^C$.
\end{corollary}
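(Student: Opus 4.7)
The strategy is to combine \tref{main}, which reduces matters to tail and mixing estimates for $r^*_{N,\ep}(t_0,x_0)$, with a large-deviation bound for the space-time empirical averages $E_N[V;Q_r]$. The core step is a single-scale concentration estimate for space-time averages of $V$ under $\alpha$-mixing: for any box $Q_s(t,x)$,
\begin{equation*}
\P\Bigl(\bigl|\tfrac{1}{|Q_s|}\int_{Q_s(t,x)} V\bigr|\geq \ep\Bigr)\leq C\exp\bigl(-c(\ep/M)^{2\beta/(d+1+\beta)}s^{\beta'}\bigr),
\end{equation*}
with $\beta'=(d+1)\beta/(d+1+\beta)$. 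I would establish this by the standard blocking argument: partition $Q_s$ into sub-cubes of side $\ell$, apply Hoeffding to a thinned subfamily of non-adjacent sub-cubes, and charge the cross-dependencies to $\alpha_V(\ell)\leq\exp(-c\ell^\beta)$. The exponent $\beta'$ emerges from optimizing $\ell$ against the two competing errors, yielding $\ell^{d+1+\beta}\sim (\ep/M)^2 s^{d+1}$.

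To translate this into a tail bound on $r^*_{N,\ep}$, I would apply two layers of union bound: first over the $O(N^{d+1})$ sub-boxes appearing in \eref{en}, then a dyadic sum in $r$ (using uniform Lipschitz continuity of $r\mapsto E_N[V;Q_r]$ inherited from boundedness of $V$ to reduce the continuous supremum in \eref{rnequantity} to a discrete one). Since $\beta'<\beta$, the polynomial and logarithmic losses are absorbed into the stretched exponential. Substituting $\ep\sim M^{-(d-1)}$ and $N\sim M^{5d+2}$ from \tref{main} and using $\beta'(d+1+\beta)=(d+1)\beta$ delivers the asserted bound $\P(T\geq t)\leq C\exp(-c(t/\ell(M))^{\beta'})$, with $\ell(M)$ of the stated form after bookkeeping.

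For the $\alpha$-mixing property of $T$ itself, the subtlety is that $r^*_{N,\ep}$ is a priori measurable only with respect to $V$ on all of $\R_t\times\R^d$. I would resolve this by the truncation
\begin{equation*}
r^*_{R}(t_0,x_0) := \sup\bigl\{r\in(0,R] : E_N[V;Q_r(t_0,x_0)]\geq \ep\bigr\},
\end{equation*}
which is $\sigma(V|_{Q_R(t_0,x_0)})$-measurable and coincides with $r^*_{N,\ep}$ on $\{r^*_{N,\ep}\leq R\}$. Choosing $R=s/4$ and combining with the tail estimate,
\begin{equation*}
\alpha_T(s)\leq \alpha_V(s/2)+ 2\P(r^*_{N,\ep}>s/4)\leq C\exp(-cs^\beta)+C\exp\bigl(-c(s/\ell(M))^{\beta'}\bigr),
\end{equation*}
and since $\beta'<\beta$ the second term dominates and produces the claimed mixing rate.

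The main obstacle is the single-box concentration estimate for mixing space-time random fields, where the combinatorics of blocking in $d+1$ dimensions and the careful tracking of $M$-dependence through the entire chain to obtain the specific $\ell(M)$ are where all the real work lies; the remaining steps are routine union bounds and truncation arguments.
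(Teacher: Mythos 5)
Your proposal follows essentially the same route as the paper: a single-box concentration estimate for space-time averages of $V$ under $\alpha$-mixing, a union bound over the $N^{d+1}$ sub-boxes in $E_N$, a geometric discretization in $r$ (using a Lipschitz-in-$r$ bound for $E_N[V;Q_r]$), and a truncation/localization argument to transfer the $\alpha$-mixing rate of $V$ to $T = r^*_{N,\ep}$. The one place you diverge is the concentration estimate itself: the paper simply cites Duerinckx--Gloria, which yields $\P(|\fint_{Q_r}V|\geq\ep)\leq C\exp(-c\ep^2|\log\ep|^{-\beta'}r^{\beta'})$, whereas you propose to derive a (slightly different) bound $C\exp(-c(\ep/M)^{2\beta/(d+1+\beta)}r^{\beta'})$ from scratch by blocking. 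Two remarks on that: first, ``apply Hoeffding to a thinned subfamily and charge cross-dependence to $\alpha_V(\ell)$'' is cleaner than the rigorous version --- a coupling-to-independence at cost $\alpha$ per block is the Berbee lemma for $\beta$-mixing, and for $\alpha$-mixing one needs the characteristic-function / Bernstein route (Merlev\`ede--Peligrad--Rio type) or the covariance route, which is exactly what the cited reference packages; second, your $\ep$-dependence gives a different power of $M$ in $\ell(M)$ than the one displayed in the corollary, though both are of the admissible ``polynomial in $M$ times polylog'' shape, so the statement as formulated is still recovered after adjusting constants. Your truncation argument for the mixing of $T$ matches the paper's, modulo the small point that the paper's mixing coefficient $\alpha(r,D)$ is diameter-restricted, so $\P(r_*(U)>R)$ requires a further union bound over a discretization of $U$, contributing a polynomial factor $(1+D)^{d+1}$ that is then absorbed because $\beta'<\beta$; you should make that explicit.
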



We expect that this quantitative regularity will have applications for quantitative homogenization of the G-equation especially in the time independent case.

\subsection{Literature} 
The G-equation was introduced by Williams \cite{Williams}, it is a popular simple model for flame propagation in turbulent combustion \cite{Peters}.  In the mathematical community popular questions have been related to homogenization \cite{XinYu,CardaliaguetNolenSouganidis,NolenNovikov,CardaliaguetSouganidis,BuragoIvanovNovikov0,BuragoIvanovNovikov,henderson2018brownian} and quantifying front speed enhancement as $M \to \infty$ \cite{MR2847532,MR3132416,MR3228465,MR3237880,MR3549020,MR3580814}.  The topic of asymptotic flame speed enhancement has lead to some very interesting mathematics, including connections with dynamical systems, however this direction seems to be less relevant to the present paper, so we will focus on explaining the works studying homogenization.

In the homogenization results listed below more and more general coercivity estimates have been developed progressively.  What we want to emphasize about our coercivity estimate, in comparison to previous results, is that it allows the most general assumption on the random media while still having explicit dependence on the random field $V$.

The first works on homogenization of the G-equation were by Cardaliaguet, Nolen and Souganidis \cite{CardaliaguetNolenSouganidis} (considering space-time periodic media) and, at the same time, by Xin and Yu \cite{XinYu} (considering time independent periodic).  In stationary ergodic media (time independent), $d=2$, Nolen and Novikov \cite{NolenNovikov} proved homogenization assuming the existence of a stream function with a certain growth condition, this would follow from a sufficiently strong mixing condition on the field.  A key step there is a waiting time estimate, their proof strongly uses the $2$-$d$ structure (scalar stream function, periodic trajectories are boundaries of open sets).   Their bound on the waiting time does explicitly depend on the spatial averages of $V$ via the stream function.  Next Cardaliaguet and Souganidis \cite{CardaliaguetSouganidis} obtained a very general homogenization result, covering stationary ergodic media in all dimensions in the time independent case.  As an important step they proved a new waiting time bound in stationary ergodic (time independent) media, the proof is abstract, using ergodicity, so, unlike our result, the dependence of the waiting time on $V$ is not explicit.  Finally we come to the works of Burago, Ivanov and Novikov \cite{BuragoIvanovNovikov0,BuragoIvanovNovikov}, which, as explained above, are the inspiration for the present work.  Their main results were the delayed coercivity condition \eref{waitcoercivity} under the uniform convergence of the means \eref{uniformmean}, and a homogenization result in space-time finite range dependence media with a special structure.  Our new result, building on \cite{BuragoIvanovNovikov}, is the first bound on the waiting time in the most general setting of space-time stationary ergodic media.

\subsection{Acknowledgments}  Thank you to Takis Souganidis for many helpful conversations and for bringing the result \cite{BuragoIvanovNovikov} to my attention. Thank you to Inwon Kim and Chris Henderson for helpful comments on the manuscript.

\subsection{Support} The author appreciates the support of the Friends of the Institute for Advanced Study and NSF RTG grant DMS-1246999.  

\section{Notation}
  For \sref{bdryflux} the only relevant parameters of the problem, which will appear in the estimates, are $d$ and $M = \|V\|_\infty$.  Dependence on $d$ will be omitted in general, we write $c$, $C$ for positive constants depending at most on the dimension which may change from instance to instance.  All dependence on $M$ will be made explicit.  In \sref{mixing} we will introduce some additional parameters related to the mixing condition, the dependence of constants $c,C$ on these parameters will also be omitted.
  
  We will need to measure various co-dimension sets $E$ of $\R^n$.  We will denote $\mathcal{H}^m$ for the $m$-dimensional Hausdorff measure.  We usually write $|E|_m = \mathcal{H}^m(E)$ when it is not too confusing.  We will also make use of the perimeter, for an open $\Omega \subset \R^n$ and a set $E \subset \R^n$
  \[ \textup{Per}(E,\Omega) = \sup \{ \int_{E \cap \Omega} \grad \cdot \varphi \ dx: \ \varphi \in C^1_c(\Omega) \ \hbox{ and } \ |\varphi| \leq 1\}.\]
  This can be defined also on closed sets $K$ as the infimum of $\textup{Per}(E,\Omega)$ over open $\Omega \supset K$. It can also be defined similarly for $\Omega$ in a flat $m$-dimensional slice of $\R^n$ or a finite union of such.  We will use this to compute perimeters on $d$-dimensional boundaries of boxes in dimension $d+1$.  See \cite{BuragoIvanovNovikov} for more details and references on the geometric measure theoretic tools needed for the G-equation.
\section{General strategy}
This section outlines the general strategy of \cite{BuragoIvanovNovikov} to integrate the local volume growth ODE
\begin{equation}\label{e.volumegrowth}
 \frac{d}{dt} |\Box_r(x_0) \cap R_t |_d =|\Box_r(x_0) \cap \partial R_t|_{d-1} - \textup{flux}(V,R_t \cap \partial \Box_r(x_0)).  
 \end{equation}
 The argument proceeds in three steps: Step 1 filling a small fraction $\alpha|\Box_r(x_0)|$, Step 2 filling from $\alpha|\Box_r(x_0)|$ to $(1-\alpha)|\Box_r(x_0)|$, Step 3 filling the small complement.

Step 1: Set $t_1 = t_0 + \frac{r}{2M}$, then we claim
\begin{equation}\label{e.alphadef}
 |\Box_r \cap R_t|_d \geq \alpha |\Box_r|_d \ \hbox{ with } \ \alpha = \frac{\omega_d}{(2M)^d}.
 \end{equation}
By the control formula and $|V| \leq M$ we have $R_{t} \subset \Box_r$ for $t_0 \leq t \leq t_1$.  Thus $|R_t| = |\Box_r \cap R_t|_d$ and \eref{Rdvolgrowth} proves the claim.

 Step 2: If the boundary flux term could be ignored, then we could just integrate \eref{alphadef} using the relative isoperimetric inequality:
 \begin{equation}\label{e.relisoperimetric}
 |\Box_r \cap \partial R_t|_{d-1} \geq \lambda_1(d) \min\{ |\Box_r \cap R_t|_d, | \Box_r \setminus R_t|_d\}^{\frac{d-1}{d}}.
 \end{equation} 
 The central difficulty is to show that the boundary flux is appropriately small, and this is where the beautiful new ideas of \cite{BuragoIvanovNovikov} come in.  In \sref{bdryflux} we show how to modify those ideas to handle a much more general class of velocity fields.
 
 Step 3:  Suppose that $|R_{t_2} \cap \Box_{2r}|_d \geq (1-2^{-d}\alpha)|\Box_{2r}|_d$ at some time $t_2$.  Let $y_0 \in \Box_r$.  Then by step 1 above
 \[ |R_{-\frac{r}{2M}}(t_2+\tfrac{r}{2M},y_0) \cap \Box_{2r}|_d \geq \alpha |\Box_r|_d \geq 2^{-d}\alpha |\Box_{2r}|_d.
 \]
 Thus, for every $y_0 \in \Box_{r}(x_0)$
 \[ R_{-\frac{r}{2M}}(t_2+\tfrac{r}{2M},y_0) \cap R_{t_2}(t_0,x_0) \neq \emptyset\]
 and so 
 \[ \Box_r(x_0) \subset R_{t_2+\frac{r}{2M}}(t_0,x_0).\]

 Thus the proof of \tref{main} will be complete if we can prove the following proposition.
 \begin{proposition}
 Suppose that $r \geq r^*_{N,\ep}(t,x)$ with $\ep = c(d)M^{-(d-1)}$ and $N = C(d)M^{5d+2}$.  Call $t_1(\Box_r)$ and $t_2(\Box_r)$ as defined above ($t_2 = +\infty$ if such time does not exist).  Then $t_2(\Box_r)$ exists and
 \[ t_2 - t_1 \leq C(d)r^d.\]
 \end{proposition}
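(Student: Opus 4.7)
The plan is to integrate the volume-growth ODE \eref{volumegrowth} for $f(t) := |\Box_r(x_0) \cap R_t|_d$ on the interval $[t_1, t_2]$. Throughout this window we have $\alpha |\Box_r|_d \leq f(t) \leq (1-2^{-d}\alpha)|\Box_r|_d$, so the relative isoperimetric inequality \eref{relisoperimetric} produces a perimeter lower bound of order $\alpha^{(d-1)/d} r^{d-1} \gtrsim M^{-(d-1)} r^{d-1}$. The entire argument reduces to dominating the boundary flux by a smaller fraction of this perimeter term.

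Writing the ODE as $f'(t) \geq P(t) - F(t)$, I would show that on any time window of length $r$ contained in $[t_1, t_2]$ the integrated flux satisfies $\int F\,dt \leq \tfrac12 \int P\,dt$. Once this is in hand, $f$ must increase by a definite fraction of $|\Box_r|_d$ per window of length $r$ (with a constant depending polynomially on $M$), forcing $t_2$ to exist with $t_2 - t_1 \leq C(d,M)\,r$; since the hypothesis $r \geq r^*_{N,\ep}(t_0,x_0)$ already absorbs the polynomial $M$-dependence into $r$, this yields the stated bound $t_2 - t_1 \leq C(d) r^d$.

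The core estimate is the flux bound, where I would adapt the BIN argument. Their strategy reorganizes the space-time integral of $V \cdot n$ over $R_t \cap \partial \Box_r$, using the incompressibility of $V$ together with the fact that $R_t$ is a super-level set of a solution of the G-equation, so as to express the flux (modulo geometric-measure-theoretic corrections) as a sum over a grid of sub-boxes of \emph{(mean of $V$ over the sub-box) $\times$ (perimeter contribution of the sub-box)}. Taking the sub-boxes from the definition \eref{en} of side $r/N$, the means are bounded by $E_N[V;Q_r(t_0,x_0)] < \ep$ by hypothesis. The sum of perimeter contributions is, as in BIN, comparable to $P(t)$ times a factor polynomial in $M$; the explicit choice $\ep = c(d) M^{-(d-1)}$ cancels this factor, while $N = C(d) M^{5d+2}$ makes the boundary-layer sub-boxes and discretization errors negligible.

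The main obstacle will be this flux estimate. BIN had uniform-in-space-time control on the means of $V$, letting them treat every face of $\partial \Box_r$ on equal footing; here, only the finitely many sub-boxes sitting inside $Q_r(t_0,x_0)$ have controlled means. Adapting the BIN decomposition so that only these sub-boxes appear, while carefully tracking the $M$-dependence of every geometric-measure-theoretic inequality used (notably the bounds relating sub-box perimeter contributions to the full perimeter $P$), dictates the precise choices of $\ep$ and $N$ and is the technical heart of the argument. A further subtlety is ensuring that the boundary flux produced during $[t_1, t_2]$ is captured by sub-boxes of a single $Q_r(t_0,x_0)$, which forces $t_2 - t_1 \lesssim r$; this meshes with the iteration described above and is consistent with the final bound.
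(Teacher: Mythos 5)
Your high-level plan (integrate the volume-growth ODE, show the flux is dominated by the isoperimetric term, iterate) is in the right spirit, and your choices of $\ep \sim M^{-(d-1)}$ and the perimeter lower bound $\alpha^{(d-1)/d}r^{d-1}$ match the paper. But the central technical step is mischaracterized in a way that leaves a genuine gap.

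First, the flux is not ``(mean of $V$ over a sub-box) $\times$ (perimeter contribution)'' summed over a grid. The actual bound (Lemma \ref{l.bdryflux}) has the form
\[
\Big|\int_I \textup{flux}(V, R_\tau\cap\partial\Box_r)\,d\tau\Big| \lesssim (\ep + ML^{-1})|I\times\partial\Box_r|_d + MLN^{-1}r\,\textup{Per}(R,\partial\Box_r\times I),
\]
obtained by splitting each sub-face flux into $\textup{flux}(F\cap R)+\textup{flux}(F\setminus R)$, using a reverse triangle inequality against the (small) total face flux, and invoking the relative isoperimetric inequality \emph{on the lateral sub-face $F\times I$}. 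The two pieces are a small-mean $\times$ area term and $M$ $\times$ a \emph{lateral space-time perimeter} term. Your description elides the second term's true nature.

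This matters because $\textup{Per}(R,\partial\Box_r\times I)$ --- the perimeter of $R$ restricted to the lateral boundary $\partial\Box_r\times I$ --- is \emph{not} comparable to the spatial perimeter $P(t)=|\Box_r\cap\partial R_t|_{d-1}$ that you want to dominate it by, and a priori there is no bound on it at a fixed radius $r$. The paper handles this with a coarea/pigeonhole argument (Lemma \ref{l.volumegrowthode}): applying the Federer coarea formula once in $|x|_\infty$ and once in $t$ to relate $\int_r^{(1+\delta)r}\textup{Per}(R,I\times\partial\Box_\rho)\,d\rho$ to $\int_I|\Box_{(1+\delta)r}\cap\partial R_\tau|_{d-1}\,d\tau \lesssim r^d$, and then choosing a good radius $\rho\in[r,(1+\delta)r]$ by the mean value theorem. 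The parameter $\delta=\beta\ep$ and the choice $N=\beta^{-2}M^3\ep^{-3}$ are dictated precisely by matching the resulting perimeter error $CM^3\delta^{-1}\ep^{-1}N^{-1}r^d$ to the mean error $C\ep\beta r^d$. Without this step your ``sum of perimeter contributions is comparable to $P(t)$'' has no justification, and the argument does not close.

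A secondary omission: the hypothesis $E_N[V;Q_r]<\ep$ controls space-time \emph{box} averages of dimension $d+1$, while the flux bound needs control of averages on $(d-1)$-dimensional \emph{faces}. Bridging these requires the mean-value / divergence-theorem argument of Lemma \ref{l.faceflux}, carried out at an intermediate scale $Lr/N$ (with $L\sim M\ep^{-1}$) rather than $r/N$, precisely to make the resulting $ML^{-1}$ error comparable to $\ep$. This intermediate scale is where most of the $M$-dependence in $N$ is generated, not ``boundary-layer sub-boxes.''

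Finally, the step where you would show $\int F \le \tfrac12 \int P$ on windows of length $r$ cannot work at that window length directly: the flux estimate is only established on intervals of length $\beta r$ with $\beta\sim M^{-(d+1)}$, since that is what the coarea/pigeonhole step requires. You would need to iterate over many short windows, which is fine in principle, but the paper instead runs a barrier comparison with an explicit ODE solution $\phi$; either conclusion works once Lemma \ref{l.volumegrowthode} is in hand.
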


\subsection{Regularity} The regularity of the reachable set is already discussed in \cite{BuragoIvanovNovikov} and \cite{CardaliaguetSouganidis}, from \eref{volumegrowth} one can derive that $R_t$ is a finite perimeter set for almost every time.  For our purposes we need a bit more. In particular we need to understand the regularity of the space-time reachable set $R$, of which $R_\tau = R \cap \{t = \tau\}$ are the time slices.

We argue formally here, and justify below by regularization, also we consider only the case $t \geq t_0$, the case $t \leq t_0$ follows by symmetry.  The indicator function of the reachable set ${\bf 1}_{R_t}$ solves the G-equation in the viscosity sense
\[ \partial_t {\bf 1}_{R_t} = |\grad {\bf 1}_{R_t}| + V(t,x) \cdot \grad {\bf 1}_{R_t}  \ \hbox{ in } \ \R^d \times [t_0,\infty)\]
with initial data $R_{t_0} = \{x_0\}$.  First, from the control interpretation, it is immediate that
\[ R_t \subset B_{(1+M)(t-t_0)}(x_0).\]
Taking absolute values on both sides of the G-equation
\begin{equation}\label{e.1+M}
 |\partial_t {\bf 1}_{R_t}|  \leq (1+M) |\grad {\bf 1}_{R_t}|.
 \end{equation}
If instead we integrate the G-equation over $\R^d$ and use the divergence theorem
\[ \partial_t|R_t|_d = \int_{\R^d} |\grad{\bf 1}_{R_t}|.\]
Integrating in time
\[ (1+M)^d(t-t_0)^d \geq |R_t|_d - |R_{t_0}|_d = \int_{t_0}^t\int_{\R^d} |\grad{\bf 1}_{R_\tau}| \ dxd\tau\]
Then using the pointwise inequality \eref{1+M} we obtain also
\[ \int_{t_0}^t\int_{\R^d} |\partial_t{\bf 1}_{R_\tau}|+ |\grad_x{\bf 1}_{R_\tau}| \ dxd\tau \leq (1+M)^{d}(2+M)(t-t_0)^d.\]
Summarized we have the following result:
\begin{lemma}
For each $(t_0,x_0)$ the reachable set $R(t_0,x_0)$ is a finite perimeter set of $\R_t \times \R^d_x$ with
\[ \textup{Per}(R(t_0,x_0),[t_0-\tau,t_0+\tau] \times \R^d)\leq 2(1+M)^{d}(2+M)\tau^d \ \hbox{ for all } \ \tau \geq 0.\]
\end{lemma}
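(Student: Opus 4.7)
The plan is to make the formal computation rigorous by a standard two-parameter approximation: smooth the initial data to avoid the singular derivative at $\{x_0\}$, and add vanishing viscosity to obtain classical $C^\infty$ solutions. Fix small $\delta, \eta > 0$, let $u_0^\delta \in C^\infty_c(B_\delta(x_0))$ satisfy $0 \leq u_0^\delta \leq 1$ and $u_0^\delta \equiv 1$ on $B_{\delta/2}(x_0)$, and let $u^{\delta,\eta}$ be the classical solution of the viscous G-equation
\[\partial_t u^{\delta,\eta} = |\grad u^{\delta,\eta}| + V(t,x)\cdot \grad u^{\delta,\eta} + \eta \Delta u^{\delta,\eta}, \qquad u^{\delta,\eta}(t_0,\cdot) = u_0^\delta.\]
The maximum principle gives $0 \leq u^{\delta,\eta} \leq 1$, and finite propagation (up to a Gaussian tail from the viscosity) gives essential support at time $t_0+\tau$ in a ball of radius $(1+M)\tau + \delta + C\sqrt{\eta\tau}$.

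For the smooth approximant, the calculations in the excerpt become legitimate and yield the space-time BV bound
\[\int_{t_0}^{t_0+\tau}\!\int_{\R^d}(|\partial_t u^{\delta,\eta}| + |\grad_x u^{\delta,\eta}|)\,dx\,ds \leq (2+M)(1+M)^d \tau^d + o_\delta(1) + o_\eta(1).\]
The subtle point is the pointwise inequality $|\partial_t u^{\delta,\eta}| \leq (1+M)|\grad u^{\delta,\eta}| + \eta|\Delta u^{\delta,\eta}|$: the $\eta|\Delta|$ term would spoil a naive integration, but testing $\partial_t u^{\delta,\eta}$ against $\phi \in C^1_c((t_0,t_0+\tau) \times \R^d)$ with $\|\phi\|_\infty \leq 1$ and integrating the $\eta\Delta$ contribution by parts converts its contribution into $O(\eta\|\grad\phi\|_\infty)$, which vanishes as $\eta \to 0$. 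Taking a supremum over such $\phi$ then recovers the $L^1$ bound on $\partial_t u^{\delta,\eta}$. The $|\grad_x u^{\delta,\eta}|$ bound itself is obtained, as in the excerpt, by integrating the equation in $x$ and using $\grad \cdot V = 0$ together with the support estimate. By the coarea formula the left side dominates $\int_0^1 \textup{Per}(\{u^{\delta,\eta}>s\},[t_0,t_0+\tau]\times\R^d)\,ds$, so almost every super-level set satisfies the same perimeter bound.

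The final step is to pass to the limits. As $\eta \to 0$, $u^{\delta,\eta}$ converges locally uniformly to the unique viscosity solution $u^\delta$ with data $u_0^\delta$ by standard vanishing viscosity theory for convex Hamilton-Jacobi equations. As $\delta \to 0$, the control formula \eref{controlformula} identifies $\{u^\delta > 0\}$ with the set reachable from $\textup{supp}(u_0^\delta) \subset B_\delta(x_0)$, which decreases to $R(t_0,x_0)$ in $L^1_{\textup{loc}}(\R_t\times\R^d_x)$ by continuous dependence of controlled trajectories on their initial position. Selecting levels $s_\eta, s_\delta \in (0,1)$ on which the perimeter bound holds and invoking lower semicontinuity of perimeter under $L^1_{\textup{loc}}$ convergence transfers the bound to $R(t_0,x_0)$. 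The case $t \leq t_0$ follows by the symmetric construction with the time-reversed equation, contributing the factor of $2$ in the final estimate. The main technical obstacle is disposing of the $\eta\Delta$ term without losing the clean factor $(1+M)$ in the pointwise bound; once that is handled as above the rest is routine BV/viscosity approximation.
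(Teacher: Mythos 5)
Your proof is correct in substance but takes a longer route than the paper, and one of your intermediate claims is stated loosely enough to be misleading.

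The paper uses a \emph{single} regularization: replace $\delta_{x_0}$ by $u^\delta(t_0,x) = \varphi(|x-x_0|/\delta)$ and solve the (inviscid) G-equation. Since the initial data is smooth and compactly supported and $V$ is Lipschitz, the viscosity solution $u^\delta$ is Lipschitz in $(t,x)$ on compact time intervals; by Rademacher and the standard fact that Lipschitz viscosity solutions of convex Hamilton--Jacobi equations satisfy the equation at points of differentiability, $\partial_t u^\delta = |\grad u^\delta| + V\cdot\grad u^\delta$ holds a.e. This is exactly what makes the ``formal'' chain of inequalities legitimate: $|\partial_t u^\delta|\leq (1+M)|\grad u^\delta|$ a.e., $\frac{d}{dt}\int u^\delta = \int|\grad u^\delta|$ a.e.\ (after integrating by parts with $\grad\cdot V=0$), and the support bound close the estimate. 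Then $u^\delta\to{\bf 1}_R$ pointwise (hence in $L^1_{\text{loc}}$) and lower semicontinuity of the BV seminorm finishes. No vanishing viscosity is needed.

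Your addition of the $\eta\Delta$ term manufactures the very obstacle you then spend most of the argument removing. It is not wrong, but it is extra work, and the key sentence is imprecise as written: ``Taking a supremum over such $\phi$ then recovers the $L^1$ bound on $\partial_t u^{\delta,\eta}$'' cannot be right at fixed $\eta>0$, because the error you produce is $O(\eta\,\|\grad\phi\|_\infty\int|\grad u^{\delta,\eta}|)$ and the sup over $\phi$ with $\|\phi\|_\infty\leq 1$ has unbounded $\|\grad\phi\|_\infty$. What you actually want is: for each \emph{fixed} $\phi$, pass $\eta\to 0$ (the error vanishes, and the left side converges by duality to the distributional pairing against $u^\delta$), and only then take the sup over $\phi$ --- which gives the BV bound on $u^\delta$, not an $L^1$ bound on $\partial_t u^{\delta,\eta}$. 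Similarly, the closing step about ``selecting levels $s_\eta,s_\delta$'' is more machinery than needed: once you have a BV bound on $u^\delta$ uniform in $\delta$, the pointwise (dominated) convergence $u^\delta\to{\bf 1}_R$ and lower semicontinuity of perimeter close the argument directly, with no coarea/level-set selection. If you insist on the viscous route, rephrase the $\phi$ step to make the order of limits explicit; otherwise you can drop the $\eta$ regularization entirely and recover the paper's shorter proof by invoking a.e.\ differentiability of the Lipschitz viscosity solution.
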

\begin{proof}
Apply all the above arguments to the solution of the G-equation with initial data
\[ u^\delta(t_0,x) = \varphi(|x-x_0|/\delta)\]
where $\varphi$ smooth, $\varphi(0) =1$, and $\varphi$ is supported in $B_1(0)$.  Then, by the control formulation, $u^\delta$ converges pointwise to ${\bf 1}_{R_t}$.  The bound of the Lemma holds for the space-time BV norm of $u^\delta$, since the BV norm is lower semi-continuous for sequences converging in $L^1$ the result is obtained.
\end{proof}
We mention one other piece of regularity information, which is a lower continuity estimate on $|\Box_r \cap R_t|_d$.
\begin{lemma}\label{l.lowercont}
For each $(t_0,x_0)$, each $\Box_r$, and $t \geq t_0$
\[ \frac{d}{dt}|\Box_r \cap R_t(t_0,x_0)|_d \geq -C(d)Mr^{d-1}.\]
\end{lemma}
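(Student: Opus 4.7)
The plan is direct: use the local volume growth identity \eref{volumegrowth}, which reads
\[ \frac{d}{dt} |\Box_r \cap R_t|_d = |\Box_r \cap \partial R_t|_{d-1} - \textup{flux}(V, R_t \cap \partial \Box_r). \]
The perimeter term on the right is manifestly non-negative, so it suffices to bound the boundary flux from above. Since $\|V\|_\infty \leq M$ and $|R_t \cap \partial \Box_r|_{d-1} \leq |\partial \Box_r|_{d-1} = 2dr^{d-1}$, we have
\[ \textup{flux}(V, R_t \cap \partial \Box_r) \leq 2dMr^{d-1}, \]
which gives the claim with $C(d) = 2d$.

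For rigor, the derivative should be read distributionally; equivalently, the lemma asserts the difference-quotient bound $|\Box_r \cap R_t|_d - |\Box_r \cap R_s|_d \geq -2dMr^{d-1}(t-s)$ whenever $t_0 \leq s \leq t$. I would establish this via the same regularization procedure used in the preceding lemma: smooth the initial data as $u^\delta(t_0,x) = \varphi(|x-x_0|/\delta)$, so that $u^\delta$ is a classical solution with $0 \leq u^\delta \leq 1$ by the maximum principle. Integrating the G-equation over $\Box_r$, and using $\grad \cdot V = 0$ to push the transport term onto the boundary, yields
\[ \frac{d}{dt} \int_{\Box_r} u^\delta \, dx = \int_{\Box_r} |\grad u^\delta|\,dx + \int_{\partial \Box_r} u^\delta \, V \cdot n \, dS \geq -2dMr^{d-1}. \]
Integrating in time from $s$ to $t$ and sending $\delta \to 0$ (using $L^1$ convergence $u^\delta \to {\bf 1}_{R_t(t_0,x_0)}$ afforded by the control formula \eref{controlformula}) yields the desired bound.

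There is essentially no obstacle here. The estimate is deliberately crude: it uses only $\|V\|_\infty \leq M$ together with the surface area of $\partial \Box_r$, and it discards all cancellation from the divergence-free property beyond simply localizing the transport contribution to the boundary. This pessimistic lower bound plays a complementary role to the much more delicate upper bounds developed later, where the smallness of the spatial averages of $V$ must be exploited in order to control exactly this flux term without the prohibitive $M$ prefactor.
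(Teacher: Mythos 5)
Your proof is correct and takes the same approach as the paper: discard the non-negative interior perimeter term in \eref{volumegrowth}, bound the boundary flux by $M|\partial\Box_r|_{d-1} = 2dMr^{d-1}$, and justify distributionally via the regularization $u^\delta$ used in the preceding lemma. The paper states this in one line; your write-up simply supplies the details.
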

\begin{proof}
Follows from \eref{volumegrowth} bounding the term $|\Box_r \cap \partial R_t|_{d-1} \geq 0$ and the term $|\textup{flux}(V,R_t \cap \partial \Box_r)| \leq M |\partial \Box_r|_{d-1} = 2dMr^{d-1}$.  Again do a regularization as in the previous proof to make this rigorous.
\end{proof}

\section{Controlling the boundary flux}\label{s.bdryflux}
The following arguments are adaptations of \cite{BuragoIvanovNovikov}.  The aim is to control the flux term using that that space-time averages of $V$ are small at the length scale $r\geq r^*_{N,\ep}$ (at least down to scale $r/N$).  We provide a heuristic description and then move to formal statements.   The first claim is that the averages of $V$ on $d$-dimensional boundary faces of $\Box_r \times [t-r,t+r]$ are small.  This is not true at scale $r/N$, but at some intermediate scale $Lr/N$ it follows from the mean value theorem and the divergence free condition.  This is made precise in the following Lemma.

\begin{lemma}\label{l.faceflux}
Suppose that $r \geq r^*_{N,\ep}(t,x)$, let $1 \leq L < N$ an integer and $F$ be a $(d-1)$-dimensional cube contained in $\Box_r(x)$ with side lengths $LN^{-1}r$ and $I$ a time interval of length at least $LN^{-1}r$ contained in $[t-r/2,t+r/2]$, then
\[\left|\int_I\textup{flux}(V,F)\ d \tau \right | \leq C(d)(\ep+ML^{-1}) |F\times I|_{d+1}\]
\end{lemma}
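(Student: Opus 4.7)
Assume without loss of generality that $F$ is perpendicular to some coordinate direction $e_j$ and lies in the hyperplane $\{x_j = s_0\}$, so $F = F' + s_0 e_j$ with $F' \subset \R^{d-1}$ a cube of side $Lr/N$. The task is to estimate $\Phi := \int_I \int_F V_j \, d\mathcal{H}^{d-1} dt$. My plan is to use the divergence theorem to express $\Phi$ as the flux through an $e_j$-translated copy of $F$ plus a lateral flux error, and then average over the translation amount. The key observation is that the averaging scale should be the small scale $r/N$ (on which we have control of spatial means), not the natural face scale $Lr/N$.

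Apply the divergence theorem in space-time to the extended divergence-free field $\tilde V = (0, V)$ on the slab $I \times (F' + [s_0 - \sigma, s_0] e_j)$ for $\sigma \in [0, r/N]$, which gives
\[
\Phi = \Phi_\sigma - \int_I \int_{\partial F' \times [s_0 - \sigma, s_0]} V \cdot n \, dS\, dt ,
\]
where $\Phi_\sigma$ denotes the flux through $F - \sigma e_j$; the time caps contribute nothing since $\tilde V$ has no time component. One chooses the sign of the shift direction so that the slab remains in $\Box_r(x)$, which is possible whenever $N \geq 2$ since then $r/N \leq r/2$. Averaging over $\sigma \in [0, r/N]$ decomposes $\Phi = \overline\Phi - \overline L$ into a bulk piece and a lateral piece.

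For the bulk, $\overline\Phi = (r/N)^{-1}\int_I \int_{F' \times [s_0 - r/N, s_0]} V_j \, dxdt$ integrates $V_j$ over a space-time box of dimensions $|I| \times (Lr/N)^{d-1} \times (r/N)$. Provided $F$ and $I$ are at grid positions, this box decomposes into roughly $L^{d-1} |I|/(r/N)$ grid-aligned $Q_{r/N}$-cubes, each with $V_j$-average at most $\ep$ by the hypothesis $r \geq r^*_{N,\ep}(t,x)$. Summing over the cubes and dividing by $r/N$ yields $|\overline\Phi| \leq C(d)\, \ep\, |F\times I|_{d+1}$. Off-grid positions of $F$ or $I$ may be reduced to grid positions by a translation of size at most $r/N$; the resulting error is bounded using only $|V| \leq M$ by $M \cdot |\partial F|_{d-2} \cdot (r/N) \cdot |I|$, which is already $O(ML^{-1}) |F\times I|_{d+1}$ and is absorbed into the final estimate.

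For the lateral term, swapping the order of the $\sigma$ and $x_j$ integrations yields
\[
\overline L = \frac{1}{r/N}\int_I \int_{\partial F'} \int_0^{r/N} (r/N - u)\, V(t, x'', s_0 - u) \cdot n \, du\, d\mathcal{H}^{d-2}(x'')\, dt .
\]
Bounding the integrand pointwise by $M$, using $\int_0^{r/N}(r/N - u)\,du = (r/N)^2/2$ and $|\partial F'|_{d-2} = 2(d-1)(Lr/N)^{d-2}$, one gets $|\overline L| \leq C(d)\, M\, (Lr/N)^{d-2}(r/N)|I| = C(d)\, M L^{-1}\, |F \times I|_{d+1}$. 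The factor $1/L$ arises precisely because the averaging scale $r/N$ is $L$ times smaller than the face scale $Lr/N$, so the $(d-1)$-dimensional lateral perimeter of the averaging slab is $L$ times smaller than $|F \times I|_{d+1}$; averaging instead over the natural range $[s_0 - Lr/N, s_0]$ would preserve the bulk bound but inflate the lateral bound back up to the useless $O(M) |F \times I|_{d+1}$. Adding the bulk and lateral estimates finishes the proof. The main technical step — and the only place where the argument could fail if done carelessly — is matching the averaging scale to the scale at which spatial mean control is available.
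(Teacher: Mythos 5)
Your proof is correct and takes essentially the same approach as the paper: the paper uses Fubini and the mean value theorem to locate a single translate of $F$ within distance $r/N$ on which the averaged flux is small and then applies the divergence theorem once, while you average the divergence identity over all translates $\sigma \in [0, r/N]$; these are interchangeable implementations of the same idea. You also correctly identify the key subtlety, which the paper's proof exploits in the same way, that the translation/averaging range must be the grid scale $r/N$ rather than the face scale $Lr/N$ in order to obtain the $L^{-1}$ gain in the lateral error term.
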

The next issue is that we are not looking at $\textup{flux}(V,\partial \Box_r \times [t-r,t+r])$ but at $\textup{flux}(V,R \cap (\partial \Box_r \times [t-r,t+r]))$.  Looking in $LN^{-1}r$ size sub-faces tiling the boundary we see that if $R$ takes up most of the measure of the face, or only a small portion then there is no problem.  The only issue is on the sub-faces where $R$ and $R^C$ both take up a nontrivial portion of the measure.  However, in this case, by the relative isoperimetric inequality there must be a corresponding proportion of  the total perimeter $\textup{Per}(R,\partial \Box_r \times I)$.  This allows to control the total flux through $R \cap (\partial \Box_r \times [t-r,t+r])$ by the total flux (already small) plus a term involving the perimeter $\textup{Per}(R,\partial \Box_r \times I)$.  This is not precisely how the argument goes, there are some nice tricks which were introduced by \cite{BuragoIvanovNovikov}, which we re-use.  

We make a technical note before stating the Lemma. It is convenient to establish our bounds actually on a space-time rectangle $\Box_r \times [t_0-\gamma r, t_0 + \gamma r]$, with a dimensional constant $\gamma = 1 + \frac{2d}{\lambda_1(d)}$ where $\lambda_1(d)$ is the constant of the relative isoperimetric inequality in the cube as in \eref{relisoperimetric}.
\begin{lemma}\label{l.bdryflux}
Let $[t-\gamma r,t+\gamma r] \times \Box_r(x) $ be a space-time rectangle with side length $r \geq r^*_{N,\ep}(t,x)$, $L \in \{1,\dots,N\}$, and $I$ be a subinterval of $[t-\gamma r,t+\gamma r]$ of length $LN^{-1}r$.  Then
\[\left|\int_I\textup{flux}(V,R_\tau \cap \partial \Box_r(x)) \ d\tau\right| \leq C(d)M{L}{N}^{-1} r\textup{Per}(R,\partial \Box_r \times I) +C(d)(\ep+ML^{-1}) | I \times \partial \Box_r|_d.\]

\end{lemma}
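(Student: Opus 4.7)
The plan is to partition $\partial \Box_r(x) \times I$ into $d$-dimensional sub-faces of side $Lr/N$ and, on each sub-face, to split the flux into an $R$-independent piece (handled by \lref{faceflux}) and a density-fluctuation piece (paid for by local perimeter).

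Concretely, I would decompose $\partial \Box_r(x)$ into its $2d$ flat $(d{-}1)$-dimensional faces and tile each one by $(d{-}1)$-cubes $F_i$ of side $Lr/N$. Since $|I| = Lr/N$, the family $\{F_i \times I\}$ partitions $\partial \Box_r(x) \times I$ into $d$-dimensional cubes of side $Lr/N$. Let $\rho_i = |R \cap (F_i \times I)|_d / |F_i \times I|_d \in [0,1]$ and split
\[
\int_{F_i \times I} (V \cdot n)\,{\bf 1}_R \, d\mathcal{H}^d \;=\; \rho_i \int_{F_i \times I} V \cdot n\, d\mathcal{H}^d \;+\; \int_{F_i \times I} (V\cdot n)({\bf 1}_R - \rho_i)\, d\mathcal{H}^d.
\]
\lref{faceflux} applied to $F_i$ and $I$ bounds the first term by $C(d)(\ep + ML^{-1})|F_i \times I|_d$. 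For the second term I would invoke the BV Poincar\'e inequality on the $d$-dimensional cube $F_i \times I$ of side $Lr/N$,
\[
\int_{F_i \times I} |{\bf 1}_R - \rho_i|\, d\mathcal{H}^d \;\leq\; C(d)\, \frac{Lr}{N}\, \textup{Per}(R, F_i \times I),
\]
which combined with $\|V\|_\infty \leq M$ bounds the second term by $C(d) M \frac{Lr}{N} \textup{Per}(R, F_i \times I)$.

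Summing both bounds over $i$, using $\sum_i |F_i \times I|_d = |\partial \Box_r \times I|_d$ together with $\sum_i \textup{Per}(R, F_i \times I) \leq \textup{Per}(R, \partial \Box_r \times I)$ (disjointness of the open sub-faces inside the $d$-dimensional union $\partial \Box_r \times I$) yields exactly the claimed inequality. The main obstacle I anticipate is hypothesis bookkeeping rather than any new analytic idea: \lref{faceflux} is stated with $F \subset \Box_r(x)$ and $I \subset [t-r/2, t+r/2]$, whereas here the sub-faces sit on $\partial \Box_r$ and $I$ ranges over $[t-\gamma r, t+\gamma r]$. The first point is cosmetic (a boundary face is the limit of interior faces, and the proof of \lref{faceflux} should extend verbatim). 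The second probably requires a small strengthening of \lref{faceflux} to accommodate the wider time window, which in the stationary setting should follow by re-centering the condition $r \geq r^*_{N,\ep}(t,x)$ at a nearby space-time point at the cost of a larger dimensional constant $C(d)$. Minor issues involving non-integer $N/L$ and the edges/corners of $\partial \Box_r$ contribute only lower-order terms absorbed using $\|V\|_\infty \leq M$.
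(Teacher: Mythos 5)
Your proof is correct and takes essentially the same route as the paper: tile $\partial\Box_r \times I$ by $d$-cubes of side $Lr/N$, bound the total flux on each by \lref{faceflux}, and pay for the restriction to $R$ with a perimeter term. Where the paper combines a reverse triangle inequality with the relative isoperimetric inequality and a trivial $M\cdot$volume bound, you combine the mean decomposition ${\bf 1}_R = \rho_i + ({\bf 1}_R - \rho_i)$ with the BV Poincar\'e inequality; for indicator functions these are the same estimate (indeed $\int_{F_i\times I}|{\bf 1}_R-\rho_i| = 2\rho_i(1-\rho_i)|F_i\times I|_d \le 2\min\{|(F_i\times I)\cap R|_d,|(F_i\times I)\setminus R|_d\}$), so the two arguments are interchangeable; the bookkeeping issue you flag about $F\subset\partial\Box_r$ and the wider time window is also how the paper handles it (applying \lref{faceflux} with radius $\gamma r$, absorbing $\gamma$ into dimensional constants).
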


It is not a-priori obvious but it turns out to be optimal to choose $L = \lceil M\ep^{-1} \rceil$.  First the proof of \lref{bdryflux} using \lref{faceflux}

\begin{proof}[Proof of \lref{bdryflux}]
Let $F$ be $(d-1)$-dimensional sub-face of $\partial \Box_r(x)$ with with side lengths $LN^{-1}r$ and $I$ a subinterval of $[t-\gamma r,t+\gamma r]$ of the same length.  First note that
\[ \textup{flux}(V,F) = \textup{flux}(V,F \cap R_\tau) + \textup{flux}(V,F \setminus R_\tau),\]
and so, applying \lref{faceflux} on with radius $\gamma r$, and by reverse triangle inequality
\begin{align*}
 \left||\int_{I}\textup{flux}(V,F \cap R_\tau) \ d\tau | -|\int_I\textup{flux}(V,F \setminus R_\tau)\ d\tau|\right| &\leq |\int_I\textup{flux}(V,F) \ d \tau | \\
 &\leq C(\ep+ML^{-1}) |F\times I|_d.
 \end{align*}
By the relative isoperimetric inequality on $F \times I$ (see \cite[Theorem A.5]{BuragoIvanovNovikov})
\begin{align*}
 \min\{|(F \times I) \cap R |_d,|(F \times I) \setminus R|_d\} &\leq CLN^{-1}r\textup{Per}(R,F \times I). 
 \end{align*}
Therefore
\[ |\int_I\textup{flux}(V,F \cap R_\tau)\ d \tau| \leq C(\ep+ML^{-1})  |F\times I|_d+CMLN^{-1}r\textup{Per}(R,F \times I).\]
Summing over a partition of $\partial \Box_r(x)\times I$ by sub-faces $F\times I$
\[ |\int_{I}\textup{flux}(V,\partial \Box_r(x) \cap R_\tau )\ d\tau| \leq C(\ep+ML^{-1}) |\partial \Box_r(x) \times I|_d+CMLN^{-1}r\textup{Per}(R,\partial \Box_r \times I)\]

\end{proof}

\begin{proof}[Proof of \lref{faceflux}]
Let $F$ as in the statement of the Lemma, we can assume that $F = [0,\frac{Lr}{N}]^{d-1} \times \{x_d = 0\}$ and $I = [0,\frac{rL}{N}]$.  Then $I \times F$ is contained in a union of $(L+1)^{d}$ of the $N^{d+1}$ space-time cubes of width $r/N$ partitioning $\Box_r \times [t-\frac{r}{2},t+\frac{r}{2}]$.  Call $P$ to be the union of the spatial projection of these rectangles,
\[ P = y+[-\tfrac{r}{2N},(L+\tfrac{1}{2})\tfrac{r}{N}]^{d-1}\times[-\tfrac{r}{2N},\tfrac{r}{2N}] \ \hbox{ for some } \ |y|_\infty \leq \tfrac{r}{2N}\]
and call $J$ to be the union of the temporal projections
\[ J = s + [-\tfrac{r}{2N},(L+\tfrac{1}{2})\tfrac{r}{N}] \ \hbox{ for some } \ |s| \leq \tfrac{r}{2N}.\]
By the definition of $r^*_{N,\ep}(t,x)$
\[ \left|\frac{1}{|J \times P|_d} \int_{J \times P} V(t,x) \cdot e_d \ dtdx\right| \leq \ep. \]
Then, by Fubini and the mean value theorem, there is a face $F' = P \cap \{x_d = h\}$ with $h \in y_d+[-\frac{r}{2N},\frac{r}{2N}]$ such that
\[ \left|\frac{1}{|J \times F'|_d} \int_{J \times F'} V(t,x) \cdot e_d \ dtdx\right| \leq \ep.\]
Applying the divergence theorem in the region $P \cap \{x_d \in [0,h]\}$ (assume $h>0$, the other case is symmetric) at each fixed time and using $\grad \cdot V = 0$
\begin{align*}
\left|\int_{J \times (P \cap \{x_d = 0\}) } V(t,x) \cdot e_d \ d\mathcal{H}^{d-1}\right| &= \left| \int_{J \times F'}V(t,x) \cdot e_d \ dtd\mathcal{H}^{d-1} + \int_{J \times (\partial P \cap \{0 < x_d < h\})} V(t,x) \cdot n \ dtd\mathcal{H}^{d-1}\right| \\
&\leq \ep |J \times F'|_d + CMh((L+1)r/N)^{d-2}|J| \\
&\leq (\ep+CM/L)|J \times F'|_d
\end{align*}
Finally $ |F'|- |F| \leq C\frac{1}{L}|F|$ and $ |J| - |I| \leq C\frac{1}{L}|I|$ and, similarly,
\[ \left|\int_{ J \times (P \cap \{x_d = 0\})} V(t,x) \cdot e_d \ dtd\mathcal{H}^{d-1}(x) - \int_{I \times F}V(t,x) \cdot e_d \ dtd\mathcal{H}^{d-1}(x)\right|\leq C\frac{M}{L}|I \times F|_d \]
and, combining, we get the result.

\end{proof}

\subsection{Volume growth differences}
Integrating the ODE \eref{volumegrowth} and using the flux bound \lref{bdryflux}, we have proven that for $r \geq r^*_{N,\ep}(t_0,x_0,\ep)$ and $I = [t',t]$ a subinterval of $[t_0,t_0+\gamma r]$ of length at least $Lr/N$ (with $L = \lceil M\ep^{-1}\rceil$, as it will be fixed for the rest of the section)
\begin{equation}\label{e.odeperRHS}
  |\Box_r(x_0) \cap R_t |_d - |\Box_r(x_0) \cap R_{t'}|_d \geq \int_{t'}^{t}|\Box_r \cap \partial R_\tau|_{d-1} \ d\tau -C\ep |I \times \partial \Box_r|_d -CM^2\ep^{-1}N^{-1}r\textup{Per}(R,I \times\partial \Box_r ).  
  \end{equation}
Our goal is the following estimate
\begin{lemma}\label{l.volumegrowthode}
Let $1>\ep>0$ and $M^{-1}\wedge\frac{1}{4} > \beta > 0$ fixed, let $N= \lceil \beta^{-2} \ep^{-3}M^3\rceil$, let $r \geq r^*_{N,\ep}(t_0,x_0)$, and let $I$ a subinterval of $[t_0,t_0+\gamma r]$ of length $|I| = \beta r$.  Then 
\begin{equation}\label{e.volode}
 |\Box_r(x_0) \cap R_t|_d - |\Box_r(x_0) \cap R_{t'}|_d \geq \int_{I}|\Box_r \cap \partial R_\tau|_d \ d \tau -C\ep r^{d-1}|I|_1 .
 \end{equation}
\end{lemma}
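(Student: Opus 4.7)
My plan is to start from \eref{odeperRHS} and absorb its two error terms into $C\ep r^{d-1}|I|_1$. The first error, $C\ep|I\times\partial\Box_r|_d = 2dC\ep r^{d-1}|I|_1$, is already of the desired form, so the task reduces to controlling the perimeter term $CM^2\ep^{-1}N^{-1}r\,\textup{Per}(R, I\times\partial\Box_r)$.

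The main tool will be a BV slicing (coarea) argument. Applying it to $\mathbf{1}_R$ with the $1$-Lipschitz function $g(x)=\max_i|x_i-x_{0,i}|$, and invoking the earlier perimeter bound $\textup{Per}(R, I\times\R^d)\le C(1+M)^{d+1}r^d$, yields
\[\int_r^{2r}\textup{Per}(R, I\times\partial\Box_s)\,ds \le C(1+M)^{d+1}r^d.\]
By averaging, some $r'\in[r,2r]$ satisfies $\textup{Per}(R, I\times\partial\Box_{r'})\le C(1+M)^{d+1}r^{d-1}$; since $r'\ge r\ge r^*_{N,\ep}(t_0,x_0)$, I may apply \eref{odeperRHS} at this $r'$ in place of $r$. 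The choice $N=\lceil\beta^{-2}\ep^{-3}M^3\rceil$ makes $CM^2\ep^{-1}N^{-1}\le C\beta^2\ep^2 M^{-1}$, so the perimeter error becomes at most $C\beta^2\ep^2 M^d r^d$. Using $\beta\le M^{-1}$ together with the main theorem's choice $\ep\lesssim M^{-(d-1)}$, this is bounded by $C\beta\ep r^d = C\ep r^{d-1}|I|_1$, giving the estimate for $\Box_{r'}$.

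The main obstacle will be bridging from $\Box_{r'}$ back to the stated $\Box_r$. Since $r'-r$ may be comparable to $r$, the shell $\Box_{r'}\setminus\Box_r$ can have volume of order $r^d$, which is not absorbable directly. I expect the resolution to combine: the monotonicity $|\Box_r\cap\partial R_\tau|_{d-1}\le|\Box_{r'}\cap\partial R_\tau|_{d-1}$, which cleanly handles the right-hand side integrand; an application of \lref{lowercont} to the shell, giving $\frac{d}{d\tau}|(\Box_{r'}\setminus\Box_r)\cap R_\tau|\ge -CMr^{d-1}$ and hence a shell correction of order $CMr^{d-1}|I|=C\beta Mr^d$ over $I$; and a refinement where $r'$ is chosen much closer to $r$ on a thin annulus, trading the annulus width against the coarea loss. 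Balancing the annulus width, coarea factor, and shell correction is the delicate part and uses all three parameter conditions $\beta\le M^{-1}$, $\ep$ small, and $N=\lceil\beta^{-2}\ep^{-3}M^3\rceil$ in the statement.
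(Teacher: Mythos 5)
Your proposal has the right ingredients but the key balancing step is not carried out, and the intermediate calculation only works by borrowing a constraint not present in the lemma's hypotheses.

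The paper's proof does exactly the ``thin annulus'' refinement that you defer to your last sentence: it slices on $\rho \in [r,(1+\delta)r]$ with $\delta = \beta\ep$, not $[r,2r]$. This choice is what makes everything close. The shell volume then satisfies $|\Box_\rho\setminus\Box_r|_d \leq C\delta r^d = C\beta\ep r^d = C\ep r^{d-1}|I|_1$, which is exactly the size of error being allowed, and the passage from $\Box_\rho$ back to $\Box_r$ needs nothing more than the crude bound $||\Box_\rho \cap R_t|_d - |\Box_r\cap R_t|_d| \leq |\Box_\rho\setminus\Box_r|_d$ together with the monotonicity $|\Box_\rho\cap\partial R_\tau|\geq|\Box_r\cap\partial R_\tau|$ you already noted. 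Your idea of applying \lref{lowercont} to the shell cannot help: the lower-continuity bound gives a correction of order $Mr^{d-1}|I| = M\beta r^d$, which with $\beta \leq M^{-1}$ is of order $r^d$, while the allowed error is $\ep\beta r^d$ --- smaller by a factor $\ep\beta$. Only shrinking the annulus can fix this.

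With $\delta = \beta\ep$, the mean-value argument produces $\rho$ with $\textup{Per}(R,I\times\partial\Box_\rho) \leq CM\delta^{-1}r^{d-1} = CM\beta^{-1}\ep^{-1}r^{d-1}$ (the paper gets the factor $CM$, rather than $C(1+M)^{d+1}$, by a two-step coarea chain: first slicing in the spatial sup-norm direction to relate the boundary-of-box perimeter to the space-time perimeter, then slicing in time and using $|n_x|\geq (1+M^2)^{-1/2}$ together with \eref{totalperimeter}). Plugged into \eref{odeperRHS} this produces a perimeter error $CM^3\beta^{-1}\ep^{-2}N^{-1}r^d$, and $N = \lceil\beta^{-2}\ep^{-3}M^3\rceil$ is exactly what is needed to bring this down to $C\beta\ep r^d$, \emph{for every} $\ep\in(0,1)$. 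By contrast your calculation with the $[r,2r]$ annulus requires $\beta^2\ep^2 M^d \leq \beta\ep$, i.e.\ $\ep \leq M^{-(d-1)}$, to absorb the perimeter error; this invokes the particular choice of $\ep$ made later in the proof of the main theorem, which is not a hypothesis of \lref{volumegrowthode}, and does not even repair the shell term. So the ``delicate part'' you flag is in fact the whole of the proof, and the first part of your argument (fixing $r'\in[r,2r]$) has to be discarded rather than refined.
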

This is basically achieved by averaging over a small range of $r$ and applying the mean value theorem to find a value of $r$ where the term $\textup{Per}(R,I \times\partial \Box_r )$ is of ``typical" size.  In the proof we will apply the following co-area formula several times:
\begin{lemma}[Federer co-area formula]\label{l.coarea}
Let $f : \R^n \to \R$ be a Lipschitz function, and $E \subset \R^n$ be an $\mathcal{H}^k$-rectifiable set.  Then the function $\lambda \mapsto \mathcal{H}^{k-1}(E \cap f^{-1}(\lambda))$ is Lebesgue measurable, $E \cap f^{-1}(\lambda)$ is $\mathcal{H}^{k-1}$-rectifiable for Lebesgue a.e. $\lambda \in \R$ and
\[ \int_{E} |\grad_{\tan} f(y)| d \mathcal{H}^{k}(y) = \int_{\R} \mathcal{H}^{k-1}(E \cap f^{-1}(\lambda)) \ d\lambda\]
where $\grad_{\tan} f(y)$ is the component of $\grad f(y)$ tangent to $E$ at $y$.
\end{lemma}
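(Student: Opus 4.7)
The plan is to apply the pointwise-in-$r$ flux estimate \eref{odeperRHS} not at the target radius $r$, but at a slightly enlarged radius $r' \in [r, r(1+\delta)]$ chosen so that the unwanted term $\textup{Per}(R, I \times \partial \Box_{r'})$ is of its typical size, and then transfer back to $r$ by paying a small $\delta$-volume error. To select $r'$, I would apply \lref{coarea} to the $1$-Lipschitz function $\phi(x) = \|x - x_0\|_\infty$ on the $d$-rectifiable set $\partial R \cap (I \times (\Box_{r(1+\delta)}\setminus \Box_r))$. Since $|\grad_{\tan}\phi| \leq 1$ and $\phi^{-1}(s/2) = \partial \Box_s$, a change of variables gives
\[ \int_r^{r(1+\delta)} \textup{Per}(R, I \times \partial \Box_s) \, ds \leq 2\, \textup{Per}(R, I \times \Box_{r(1+\delta)}), \]
so by the mean value theorem there exists $r' \in [r, r(1+\delta)]$ with $\textup{Per}(R, I \times \partial \Box_{r'}) \leq 2\,\textup{Per}(R, I \times \Box_{r(1+\delta)})/(\delta r)$.

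To make this useful, the local spacetime perimeter on the right must be bounded by something polynomial in $M$ times $r^d$. I would combine the pointwise PDE inequality \eref{1+M} with the local volume growth ODE \eref{volumegrowth}: integrating \eref{volumegrowth} on $\Box_{2r}$ over $I \subset [t_0, t_0 + \gamma r]$ and using $|\Box_{2r} \cap R_t| \leq (2r)^d$ together with the trivial flux bound $|\textup{flux}(V, R_t \cap \partial \Box_{2r})| \leq CMr^{d-1}$ produces the local spatial perimeter bound $\int_I|\Box_{2r} \cap \partial R_\tau|_{d-1}\,d\tau \leq CMr^d$. Then \eref{1+M} promotes this to the spacetime bound $\textup{Per}(R, I \times \Box_{r(1+\delta)}) \leq CM^2 r^d$, which yields $\textup{Per}(R, I \times \partial \Box_{r'}) \leq CM^2 r^{d-1}/\delta$.

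Inserting this into \eref{odeperRHS} at radius $r'$ (legal since $r' \geq r \geq r^*_{N,\ep}$, with the interval length requirement $|I| = \beta r \geq Lr'/N$ secured by the final choice of $N$), and using $\Box_r \subset \Box_{r'}$ on the main boundary term together with $|\Box_{r'}\setminus \Box_r|_d \leq C\delta r^d$ on the volume differences, I would arrive at
\[ |\Box_r \cap R_t|_d - |\Box_r \cap R_{t'}|_d \geq \int_I |\Box_r \cap \partial R_\tau|_{d-1}\, d\tau - C\ep r^{d-1}|I| - \frac{CM^4 r^d}{\ep N \delta} - C\delta r^d. \]
Taking $\delta = c\ep\beta$ controls the last term by $C\ep\beta r^d = C\ep r^{d-1}|I|$, and requiring $N$ a suitably large multiple of $M^4/(\ep^3\beta^2)$ controls the perimeter error the same way. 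This matches the stated hypothesis $N = \lceil \beta^{-2}\ep^{-3}M^3\rceil$ up to the precise power of $M$ coming from the local regularity step.

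The main obstacle is the flux-perimeter term $CM^2\ep^{-1}N^{-1}r\,\textup{Per}(R, I \times \partial \Box_r)$ in \eref{odeperRHS}, a $(d-1)$-dimensional trace quantity that cannot be bounded pointwise in $r$. The averaging plus co-area trick converts it into the ambient local spacetime perimeter of $R$, which is precisely what the local volume growth ODE and the built-in time-versus-space derivative bound \eref{1+M} control. The delicate point is a two-sided constraint on $\delta$: small enough that $C\delta r^d$ stays below the target error, yet large enough that $N \gtrsim M^4/(\ep^2\beta\delta)$ remains a polynomial in $M$; the choice $\delta \sim \ep\beta$ resolves this balance.
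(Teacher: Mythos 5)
Your proposal does not address the statement it was supposed to prove. The assigned statement is \lref{coarea} itself, the Federer co-area formula for rectifiable sets: one must show that $\lambda \mapsto \mathcal{H}^{k-1}(E \cap f^{-1}(\lambda))$ is Lebesgue measurable, that $E \cap f^{-1}(\lambda)$ is $\mathcal{H}^{k-1}$-rectifiable for a.e.\ $\lambda$, and that $\int_{E} |\grad_{\tan} f| \, d\mathcal{H}^{k} = \int_{\R} \mathcal{H}^{k-1}(E \cap f^{-1}(\lambda))\, d\lambda$. What you have written instead is an outline of the proof of \lref{volumegrowthode}, the volume-growth difference inequality, in which you explicitly \emph{invoke} \lref{coarea} (``apply \lref{coarea} to the $1$-Lipschitz function $\phi(x)=\|x-x_0\|_\infty$\dots''). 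As a proof of \lref{coarea} this is circular: the lemma is used as a black box, and none of its three assertions is touched anywhere in your argument. The surrounding material about \eref{odeperRHS}, \eref{volumegrowth}, \eref{1+M}, the mean value selection of a good radius, and the choice $\delta \sim \beta\ep$ is machinery for a different lemma, not for the co-area formula.

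For clarity about what would actually be required: \lref{coarea} is a classical theorem of geometric measure theory which the paper records without proof (it is standard; see for instance Federer's book or Ambrosio--Fusco--Pallara, where it appears as the coarea formula for rectifiable sets). A genuine proof proceeds by using the $\mathcal{H}^k$-rectifiability of $E$ to write $E$, up to an $\mathcal{H}^k$-null set $Z$, as a countable union of pieces of $C^1$ $k$-dimensional submanifolds; on each such piece one applies the smooth coarea (or area) formula with the tangential differential of $f$, which yields both the measurability in $\lambda$ and the integral identity piece by piece; the null remainder is handled by the Eilenberg inequality, which bounds the upper integral of $\lambda \mapsto \mathcal{H}^{k-1}(Z \cap f^{-1}(\lambda))$ by $C(k)\,\textup{Lip}(f)\,\mathcal{H}^{k}(Z) = 0$, and this also gives the a.e.\ rectifiability of the slices. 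None of these steps appear in your proposal. (As an aside, had the target been \lref{volumegrowthode}, your outline does essentially follow the paper's argument for that lemma --- radial averaging via co-area, mean value theorem to pick $\rho \in [r,(1+\delta)r]$, the perimeter bound from \eref{volumegrowth} together with the time--space slope bound, and the balancing choice of $\delta$ and $N$ --- but that is not the statement you were asked to prove.)
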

\begin{proof}[Proof of \lref{volumegrowthode}]   By the co-area formula \lref{coarea} applied to $\partial R$ with $f(t,x) = |x|_\infty$ and using $|\grad_{\tan} f| \leq 1$, for some $\delta>0$ to be chosen,
\begin{equation}\label{e.dtod-1}
 \mathcal{H}^{d}((I \times \Box_{(1+\delta )r} ) \cap \partial R) \geq \int_{r}^{(1+\delta )r}\textup{Per}(R,I \times \partial \Box_\rho) \ d\rho.
 \end{equation}
Again from the co-area formula \lref{coarea} applied to $\partial R$ now with $f(t,x) = t$
\begin{equation}\label{e.coareat}
 \frac{1}{(1+M^2)^{1/2}}\mathcal{H}^{d}((I \times \Box_{(1+\delta)r}) \cap \partial R ) \leq \int_{I} \mathcal{H}^{d-1}(\Box_{(1+\delta)r} \cap \partial R_\tau) d\tau.
 \end{equation}
In more detail, since the normal direction $n = (n_t,n_x)$ to $\partial R$ at $(t,x)$ has
\[ |n_x| = \frac{1}{(1+|V(t,x) \cdot \hat{n}_x|^2)^{1/2}} \geq \frac{1}{(1+M^2)^{1/2}}\]
and so, using $\grad_{t,x} f(t,x) = (1,0)$,
\[ |\grad_{\tan} f|(t,x) =(1-|\grad f(t,x)\cdot n|^2)^{1/2} = (1-|n_t|^2)^{1/2} = |n_x| \geq \frac{1}{(1+M^2)^{1/2}}. \]
Plugging this inequality into the co-area formula \lref{coarea} gives \eref{coareat}.

Next estimate $\int_{I} \mathcal{H}^{d-1}(\Box_{(1+\delta)r} \cap \partial R_\tau) \ d\tau$, as in \cite[Lemma 4.2]{BuragoIvanovNovikov}, by integrating \eref{volumegrowth} on $I$ and bounding the flux term by $M|\partial \Box_r|_{d-1}$ and the volume difference by the total volume of $\Box_r$:
\begin{equation}\label{e.totalperimeter}
r^d+CMr^{d-1}|I| \geq \int_{I} \mathcal{H}^{d-1}(\Box_{(1+\delta)r} \cap \partial R_\tau) \ d\tau.
\end{equation}
Since $|I| \leq M^{-1}r$ the left hand side is bounded above by $C r^d$.

Combining the previous inequalities \eref{totalperimeter}, \eref{coareat}, and \eref{dtod-1}
\[ CMr^d \geq \int_{r}^{(1+\delta)r} \textup{Per}(R,I \times \partial \Box_\rho) \ d\rho\]
and so, for some $r \leq \rho \leq (1+\delta)r$
\[ \textup{Per}(R,I \times \partial \Box_\rho) \leq CM\delta^{-1}r^{d-1}.\]
Plugging this into the difference equation \eref{odeperRHS}
\begin{align*}
  |\Box_\rho \cap R_t |_d- |\Box_\rho \cap R_{t'} |_d&\geq\int_{I}|\Box_\rho \cap \partial R_\tau|_{d-1}\ d\tau -C\ep |\partial \Box_\rho|_{d-1} |I|_1 -CM^2\ep^{-1}N^{-1}r\textup{Per}(R,I \times \partial \Box_\rho ).  \\
  &\geq \int_{I}|\Box_r \cap \partial R_\tau|_{d-1}  \ d\tau -C\ep r^{d-1}|I|_1-CM^3\delta^{-1}\ep^{-1}N^{-1}r^{d}.
  \end{align*}
  Then using
  \[ ||\Box_\rho \cap R_t |_d - |\Box_r \cap R_{t}|_d| \leq |\Box_\rho \setminus \Box_r|_d \leq C\delta r^d\]
  and choosing $\delta = \beta \ep$ and $N = \beta^{-2}M^3\ep^{-3}$ to match the size of all the error terms
  \[ |\Box_r \cap R_t |_d- |\Box_r \cap R_{t'} |_d\geq\int_{I}|\Box_r \cap \partial R_\tau |_{d-1} \ d\tau - C\ep r^{d-1}|I|_1.\]
  Note that with these choices $LN^{-1} =\beta^2\ep^2M^{-2} \leq \beta$ using $\beta \leq 1/4$, $M \geq 1/2$ and $\ep < 1$, thus the application of \eref{odeperRHS} above was justified since $|I|_1 = \beta r \geq LN^{-1}r$.
\end{proof}

\subsection{Integrating the volume growth differences}\label{s.integration}  This section considers the difference equation \eref{volode} for the the growth of $|R_t \cap \Box_r(t_0,x_0)|$.  All of the necessary analysis was already carried out by \cite{BuragoIvanovNovikov}, we provide a proof anyway for completeness and to be clear about the choice of the constants $\ep$ and $N$.

By \lref{volumegrowthode} and the relative isoperimetric inequality the difference inequality holds for intervals of length $|I| = \beta r$ 
\begin{equation}\label{e.volode1}
 |\Box_r(x_0) \cap R_t|_d - |\Box_r(x_0) \cap R_{t'}|_d \geq \lambda_1(d)\int_{t'}^t \min\{ |\Box_r \cap R_\tau|_d,| \Box_r \setminus R_\tau|_d\}^{\frac{d-1}{d}} \ d \tau -C\ep r^{d-1}|I|_1 
 \end{equation}
as long as $N(\ep,\beta)$ as in \lref{volumegrowthode} and $r \geq r^*_{N,\ep}(t_0,x_0)$.  Here $\lambda_1(d)$ is the relative isoperimetric constant for cubes in $\R^d$.  We will compare $|\Box_r(x_0) \cap R_t|$ with $r^d \phi(t/r)$ where $\phi$ solves the ODE
 \begin{equation}\label{e.phiode}
  \phi'(t) = \frac{1}{2}\lambda_1 \min\{\phi(t),1-\phi(t)\}^{\frac{d-1}{d}} \ \hbox{ with } \ \phi(0) = 0.
  \end{equation}
 The ODE \eref{phiode} comes, after rescaling, from \eref{volode1} if the error term is ignored and $t-t'$ could be taken arbitrarily small.  The factor of $\frac{1}{2}$ in the front is used to absorb the error terms, any constant smaller than $1$ could be used which would affect the choice of parameters $\ep$ and $N$.  Of course \eref{phiode} does not have uniqueness, so we specify the solution we are interested in
 \[ \phi(t) = \begin{cases} 
 at^d  &\hbox{$0 \leq t \leq b$}\\
 1- a(2b-t)^d &\hbox{$b \leq t \leq 2b$}
 \end{cases}
 \]
 with $a = (\frac{\lambda_1}{2d})^d$ and $b = (\frac{1}{2a})^{1/d} = \frac{d}{\lambda_1}$.  Note that $|\phi'(t)| \leq d a b^{d-1}$, which is just another dimensional constant.  

\begin{lemma}
Suppose that $r \geq r^*_{N,\ep}(t_0,x_0)$ with $N = C(d)M^{5d+2}$ and $\ep = c(d)M^{-(d-1)}$ for appropriately large/small dimensional constants.  Let $t_0 < t_1 < t_2$ be, respectively, the first time that $ |\Box_r \cap R_t|_d \geq \alpha |\Box_r|_d$ and the first time that $ |\Box_r \cap R_t|_d \geq (1-2^{-d}\alpha) |\Box_r|_d$.  Then
\[ |\Box_r \cap R_t|_d \geq r^d \phi((t-t_1)/r) \ \hbox{ for } \ t_1 \leq t \leq t_2,\]
 in particular $t_2$ exists and $t_2 \leq t_1 + \frac{2d}{\lambda_1(d)} r$.
\end{lemma}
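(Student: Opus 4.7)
The plan is to compare $f(t) := |\Box_r \cap R_t|_d$ to $\psi(t) := r^d \phi((t-t_1)/r)$ via a discrete super-solution argument, exploiting the factor-of-two buffer between the ``full rate'' $\lambda_1$ appearing in the difference inequality \eref{volode1} and the ``half rate'' $\lambda_1/2$ in the ODE \eref{phiode} defining $\phi$. Without loss of generality set $t_1 = 0$ and regularize so $f$ is continuous. The parameters are coordinated so that $\beta \sim \alpha/M \sim M^{-d-1}$ makes the constraint $N \geq \beta^{-2}\ep^{-3}M^3$ from \lref{volumegrowthode} match $N = CM^{5d+2}$ at $\ep = cM^{-(d-1)}$. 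Phases I ($\psi \leq r^d/2$) and II ($\psi \geq r^d/2$) are handled by parallel arguments under the involution $f \leftrightarrow r^d - f$; I focus on phase I.

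Define $t^* := \inf\{t \in (0, br] : f(t) < \psi(t)\}$ and suppose for contradiction $t^* \leq br$. If $t^* \leq \beta r$ then \lref{lowercont} gives $f(t^*) \geq \alpha r^d - CM\beta r^d \geq \alpha r^d/2 \gg a\beta^d r^d \geq \psi(t^*)$, contradicting $f(t^*) = \psi(t^*)$. Otherwise I sum the difference inequality \eref{volode1} across the $\lfloor t^*/(\beta r) \rfloor$ consecutive sub-intervals of length $\beta r$ covering $[0, t^*]$. To use the isoperimetric-style lower bound $\min\{f, r^d - f\}^{(d-1)/d} \geq \psi^{(d-1)/d}$ pointwise on $[0, t^*]$, one needs $f \in [\psi, r^d - \psi]$ there. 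The lower bound is the defining property of $t^*$; the upper bound $f \leq r^d - \psi$ follows from a second application of \lref{lowercont} combined with $t^* < t_2$ (any excursion of $f$ above $r^d - \psi$ would, by one-sided continuity $f' \geq -CMr^{d-1}$, keep $f$ near $r^d$ for a time of order $r/M$, which is incompatible with both $f(t^*) = \psi(t^*) \leq r^d/2$ and $t^* < t_2$ once $\beta$ is taken at scale $\alpha/M$). Using $\psi' = (\lambda_1/2) \min\{\psi, r^d - \psi\}^{(d-1)/d}$, the summed difference inequality reads
\[ f(t^*) - f(0) \geq 2[\psi(t^*) - \psi(0)] - C\ep r^{d-1} t^*, \]
and substituting $f(t^*) = \psi(t^*)$, $f(0) \geq \alpha r^d$, and $\psi(0) = 0$ collapses this to $\alpha + \phi(s^*) \leq C\ep s^*$ with $s^* := t^*/r$.

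A short calculus estimate locates the minimum of $s \mapsto \alpha + \phi(s) - C\ep s$ on $[0, b]$ at the value where $\phi'(s) = C\ep$; since $\phi'(s) = (\lambda_1/2)\phi(s)^{(d-1)/d}$ in phase I, the minimum value is of order $\alpha - C'\ep^{d/(d-1)}$, which is strictly positive under $\alpha = \omega_d/(2M)^d$ and $\ep = cM^{-(d-1)}$ with $c$ chosen small enough. This contradiction proves $f \geq \psi$ throughout phase I. Phase II follows from the identical argument applied to $h := r^d - f$ versus $h^* := r^d - \psi$: in phase II $\min\{f, r^d - f\} = h$, and $h^*$ satisfies the ODE at the half rate $\lambda_1/2$, so the same factor-of-two buffer absorbs the $O(\ep)$ error. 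The stated bound $t_2 \leq t_1 + 2br = t_1 + \frac{2d}{\lambda_1}r$ is then immediate from $\phi(2b) = 1 > 1 - 2^{-d}\alpha$. The principal obstacle is the pointwise upper bound $f \leq r^d - \psi$: it can only fail if $f$ fluctuates appreciably on sub-$\beta r$ scales, and $\beta \sim \alpha/M$ is precisely the scale at which \lref{lowercont} rules this out, which is how this choice of $\beta$ (and hence of $N$) is forced.
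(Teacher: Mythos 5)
Your proposal takes a genuinely different route from the paper's. The paper works with a \emph{backward-shifted} $\psi(t) = r^d\phi((t-t_1')/r)$, with $t_1'$ chosen so that $\psi(t_1) = \tfrac{\alpha}{2}|\Box_r|$, applies the difference inequality \eref{volode1} on the \emph{single} short interval $[t_*-\beta r, t_*]$ at the touching time, and derives the contradiction from a slope comparison there; crucially, the shift means $\psi(t_*)\in[\tfrac{\alpha}{2}r^d,(1-2^{-d}\alpha)r^d]$, which gives the uniform lower bound \eref{psi'} on $\psi'$ that absorbs the error, and which handles ``Phase I'' and ``Phase II'' simultaneously. You instead keep $\psi(t_1)=0$, sum \eref{volode1} over all sub-intervals covering $[t_1,t^*]$, and use the initial gap $f(t_1)-\psi(t_1)=\alpha r^d$ together with the factor-of-two slack in the integral. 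The numerology is reproduced correctly at the end, but I believe the argument has genuine gaps.

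The main gap is the pointwise upper bound $f\leq r^d-\psi$ on $(t_1,t^*)$, which you need in order to write $\min\{f,r^d-f\}^{(d-1)/d}\geq \psi^{(d-1)/d}$ on the whole integration interval. You assert it follows from \lref{lowercont} plus $t^*<t_2$, but neither delivers the bound. From $t^*<t_2$ you only get $r^d-f(\tau)>2^{-d}\alpha r^d$, which is far weaker than $r^d-f(\tau)\geq\psi(\tau)$ once $\psi(\tau)>2^{-d}\alpha r^d$ (which is most of Phase I). From \lref{lowercont} you get $f(\tau)\leq f(t^*)+CMr^{d-1}(t^*-\tau)$, but $t^*-\tau$ can be of order $r$, so this is only $f(\tau)\leq\psi(t^*)+CMr^d$, useless for $M\gtrsim 1$. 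If you try to salvage by replacing $\psi$ with $\min\{\psi,2^{-d}\alpha r^d\}$ in the integrand, the integral estimate no longer returns $2\psi(t^*)$ on the regime $\psi>2^{-d}\alpha r^d$, and the resulting inequality $r^d/2-\alpha r^d \geq 2^{1-d}\alpha r^d + \lambda_1(2^{-d}\alpha)^{(d-1)/d}r^{d-1}(t^*-\tau_0) - C\ep r^{d-1}t^*$ is not a contradiction for small $\alpha,\ep$. The paper avoids this entirely by working only on $[t_*-\beta r,t_*]$: there \lref{lowercont} does give $|\Box_r\setminus R_\tau|\geq r^d-\psi(\tau)-CM\beta r^d$ (this is \eref{complementbd}), and the $O(M\beta r^d)$ loss is then absorbed by the subadditivity of $x\mapsto x^{(d-1)/d}$ against the slope lower bound \eref{psi'}.

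The ``Phase II is identical'' claim is also suspect. Writing $h=r^d-f$, $h^*=r^d-\psi$, you want $h\leq h^*$, and the difference inequality becomes $h(t)-h(t')\leq -\lambda_1\int\min\{h,r^d-h\}^{(d-1)/d}+\text{err}$; on $(t_1^{\rm II},t^*)$ you have $h<h^*$, hence $h^{(d-1)/d}<(h^*)^{(d-1)/d}$, so you \emph{cannot} substitute $(h^*)^{(d-1)/d}$ for $\min\{h,\cdot\}^{(d-1)/d}$ as a lower bound. The direction of the inequality flips, so the argument is not symmetric. The paper's unified treatment via the shift sidesteps this phase decomposition altogether.
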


Now we can also specify that $\gamma = 1 + \frac{2d}{\lambda_1(d)}$, since $t_1 \leq t_0+\frac{r}{2M} \leq t_0 + 1$, and with the above Lemma $t_2 \leq t_1 + \frac{2d}{\lambda_1(d)}r$.

\begin{proof}
Fix $1>\ep>0$ and $\frac{1}{M} \wedge \frac{1}{4} > \beta > 0$ to be made precise in the course of the proof.  Let $N = \beta^{-2}M^3\ep^{-3}$ and $r \geq r^*_{N,\ep}(t_0,x_0)$ so that \lref{volumegrowthode}, and hence \eref{volode1}, hold on intervals $I$ of length $|I| = \beta r$.

Call $\psi(t) = r^d\phi((t-t_1')/r)$ started at time $t_0 \leq t_1' \leq t_1$ chosen so that $\psi(t_1) = \frac{\alpha}{2}|\Box_r|_d$, precisely,
\[ t_0 \leq t_1' = t_1 - \frac{d\alpha^{1/d}}{2\lambda_1}r\]
 Then $\psi(t_1) <\alpha| \Box_r| \leq | \Box_r \cap R_{t_1}|$.  Let $t_1 < t_* \leq t_2$ be the first time that the inequality $\psi(t) < |\Box_r \cap R_t|_d$ fails (if such time exists).  Note that equality does hold at $t_*$ by \lref{lowercont}.
 
 First we get a lower bound on $t_* - t_1$. Note that, from the lower continuity estimate for $|\Box_r \cap R_t|_d$ \lref{lowercont} and the upper bound on the derivative of $\phi$ and hence $\psi$,
 \[ - M r^{d-1}(t_*-t_1)\leq |\Box_r \cap R_{t_*}|_d - |\Box_r \cap R_{t_1}|_d  \leq \psi(t_*) - \alpha|\Box_r|_d \leq C r^{d-1}(t_*-t_1) - \tfrac{\alpha}{2}|\Box_r|_d. \]
 Rearranging this
 \[ (t_* - t_1) \geq cM^{-1}\alpha r\]
 We will apply \eref{volode1} on an interval of length $\beta r$ with 
 \begin{equation}\label{e.betaconstraint}
  \beta \leq cM^{-1}\alpha.
 \end{equation} 
 to be specified more precisely below (actually matching this upper bound, up to a dimensional constant, will be the right choice).  With this choice the ordering $|\Box_r \cap R_\tau| \geq \psi(\tau)$ holds on the interval $t_* - \beta r \leq \tau \leq t_*$.

 Note that for all $t_* - \beta r \leq \tau \leq t_*$
 \begin{equation}\label{e.complementbd}
  r^d - \psi(\tau) - C\beta r^{d} \leq r^d - \psi(t_*) \leq | \Box_r \setminus R_{t_*}|_d \leq | \Box_r \setminus R_{\tau}|_d +M\beta r^d
 \end{equation}
 again from \lref{lowercont} and the bound on $\psi'$. Also note that on $t_* - \beta r \leq \tau \leq t$
 \begin{equation}\label{e.psi'}
  \psi'(\tau) \geq \frac{1}{2}\lambda_1(d)(2^{-d}\alpha)^{\frac{d-1}{d}}r^{d-1}
  \end{equation}
 since $t_* - \beta r \geq t_1$ by \eref{betaconstraint} so $\psi(\tau) \geq \frac{\alpha}{2}|\Box_r|_d$ and, by definition, $t_* \leq t_2$ so 
 \[ \psi(\tau) \leq \psi(t_*) = |\Box_r \cap R_{t_*}|_d \leq (1-2^{-d}\alpha)|\Box_r|_d.\]
 
 Now finally we have all the necessary set up to compute the ``slope" at the touching time
\begin{align*}
  \psi(t_*) - \psi(t_* - \beta r) &\geq |\Box_r(x_0) \cap R_{t_*}|_d - |\Box_r(x_0) \cap R_{t_*-\beta r}|_d \\
  \hbox{\small{by \eref{volode1}, \eref{betaconstraint}}} \ \ &\geq \lambda_1(d)\int_{t_*-\beta r}^{t_*} \min\{ |\Box_r \cap R_\tau|_d,| \Box_r \setminus R_\tau|_d\}^{\frac{d-1}{d}} \ d \tau -C\ep r^{d-1}|I| \\
  \hbox{\small{by \eref{complementbd}}} \ \ &\geq \lambda_1(d)\int_{t_*-\beta r}^{t_*} \min\{ \psi(\tau) ,r^d - \psi(\tau) - CM\beta r^d\}^{\frac{d-1}{d}} \ d \tau - C \ep r^{d-1}|I| \\
  \hbox{\small{by subadditivity}} \ \ &\geq \int_{t_*-\beta r}^{t_*}\lambda_1(d)\min\{ \psi(\tau) ,r^d - \psi(\tau) \}^{\frac{d-1}{d}}-C [M^{\frac{d-1}{d}}\beta^{\frac{d-1}{d}}+\ep] r^{d-1}\ d \tau \\
  \hbox{\small{by \eref{psi'}}} \ \ &\geq \int_{t_*-\beta r}^{t_*} \psi'(\tau)+ [2^{-d}\lambda_1(d)\alpha^{\frac{d-1}{d}} -C(M^{\frac{d-1}{d}}\beta^{\frac{d-1}{d}}+\ep)] r^{d-1}\ d \tau \\
  & > \int_{t_*-\beta r}^{t_*} \psi'(\tau) d\tau
  \end{align*}
  which is a contradiction as long as we have fixed
  \[ \beta = c(d)M^{-1}\alpha \ \hbox{ and } \ \ep = c(d)\alpha^{\frac{d-1}{d}}\]
  so that the term in square brackets in the second to last line is strictly positive.  Recalling the definition of $\alpha$ from \eref{alphadef} we see, for $M \geq 1$, $\beta =c(d) M^{-(d+1)}$ and $\ep = c(d) M^{-(d-1)}$.  This also finally specifies $N = C(d)M^{5d+2}$. Also note that the constraint \eref{betaconstraint} are satisfied, up to good choices of the dimensional constants $c(d)$.

\end{proof}

\section{Bounds and mixing estimates}\label{s.mixing}

Given a mixing condition on $V$ it is straightforward to apply known result and derive from \tref{main} tail bounds and mixing estimates on $T$.  We will work with the notion of $\alpha$-mixing since that is a standard notion which is more general than finite range dependence.

For each Borel set $U \subset \R_t \times \R^d_x$ define the cylinder $\sigma$-algebras generated by $V$
\[ \mathcal{F}(U) = \sigma(V(t,x): (t,x) \in U).\]
For a pair of $\sigma$-algebras $\mathcal{F}_1$ and $\mathcal{F}_2$ the $\alpha$-mixing coefficient is defined
\[ \alpha(\mathcal{F}_1,\mathcal{F}_2) =  \sup_{A \in \mathcal{F}_1 ,B \in \mathcal{F}_2} |\P(A \cap B) - \P(A)\P(B)|  \] 
Say that $V$ is $\alpha$-mixing in $(t,x)$ if for all diameters $D>0$ the coefficients (abusing notation)
\[ \alpha(r,D) = \sup\{  \alpha( \mathcal{F}(U),\mathcal{F}(U')): \ U, U' \ \hbox{ Borel sets with $d(U,U') \geq r$ with diameter $\leq D$} \}\]
have $\lim_{r \to \infty} \alpha(r,D) = 0$.

We make the assumption of stretched exponential decay of the $\alpha$-mixing coefficients, for some exponent $\beta>0$ and length scale $\ell>0$ and parameters $A,B>0$ we assume
\[ \alpha(r,D) \leq A(1+D)^B\exp(- \ell^{-\beta}r^\beta),\]
and say that $V$ has $\alpha$-mixing with stretched exponential decay with exponent $\beta$ and length scale $\ell$.  We take $\ell = 1$, since the general case can be derived by rescaling.  The constants $c,C$ which appear in the remainder of the section will depend on $A,B$ as well as $d$ and we will not keep track of this dependence any further.

In this case a concentration estimate holds for the spatial averages, from \cite[Proposition 1.9]{DuerinckxGloria}, for any $0 < \ep < 1/2$
\begin{equation}\label{e.concentration}
 \P( |\frac{1}{|Q_r|}\int_{Q_r(t_0,x_0)} V(t,x) \ dt dx| \geq \ep) \leq C \exp(-c \ep^2|\log \ep|^{-\beta'}r^{\beta'}) 
 \end{equation}
with $\beta' = \frac{(d+1)\beta}{d+1+\beta} < \beta$.  Of course the constants in the concentration estimate depend on the constants in the $\alpha$-mixing assumption, but we will not keep track of this dependence.

We now aim to use this estimate to bound the tails of $T(t_0,x_0)$.  By stationarity we can work with $T = T(0,0)$.  Recall
\[ T = r^*_{N,\ep} = \sup \{ r : E_N[V,Q_r] \geq \ep\}\]
with $\ep  = c(d)M^{-(d-1)}$ and $N = \lceil C(d)M^{5d+2} \rceil$ and $E_N$ defined in \eref{en}.

 Applying \eref{concentration} with a union bound gives
 \[ \P(E_N[V;Q_r] \geq \ep) \leq CM^{(5d+2)(d+1)} \exp(-c M^{-2(d-1)}|\log M|^{-\beta'}r^{\beta'}) \]
 We want to control $T$ by a union bound so we need to discretize in $r$. The discretization error is
 \[ |E_N[V;Q_{\lambda r}] - E_N[V;Q_{r}]| \leq CM^{(5d+2)(d+1)}(\lambda-1)^d \leq \ep/2\]
 if we choose $\lambda = 1+cM^{-\frac{(5d+2)(d+1)}{d}-2}$.  So that
 \begin{align*}
  \P(T \geq \tau ) &\leq \sum_{\lambda^k \geq \tau} \P(E_N[V;Q_{\lambda^kr_0}] \geq \ep/2) \\
  &\leq  \sum_{\lambda^k \geq \tau} CM^{(5d+2)(d+1)} \exp(-c M^{-2(d-1)}|\log M|^{-\beta'}\lambda^{k\beta'}) \\
  & \leq C \exp(-c M^{-2(d-1)}|\log M|^{-C}\tau^{\beta'})
  \end{align*}
  where finally we absorbed all the polynomial powers of $M$ in front by changing the power of the logarithm inside the exponential.  This proves the desired tail bounds on $T$.

Next we consider the mixing estimate on $T$. Define the localization, 
\[ r_*^R(t,x) = \sup\{ 0 < r \leq R:  E_N[V,Q_r(t,x)] \geq \ep\}\]
and, for a bounded Borel set $U \subset \R_t \times \R^d_x$,
\[ r_*(U) = \sup_{(t,x) \in U} r_*(t,x).\]
Note that
\[ r^R_*(t,x) \in \mathcal{F}(U+Q_R) \ \hbox{ for all } \ (t,x) \in U.\]
On the event that $\{r_*(U) < R\}$ the localizations $r_*^R(t,x)$ and the actual values of $r_*(t,x)$ agree on $U$.  More precisely, for all $(t,x) \in U$,
\[ r^R_*(t,x) {\bf 1}_{\{r_*(U) < R\}} = r_*(t,x) {\bf 1}_{\{r_*(U) < R\}}. \]
This event has high probability since, by a standard discretization and union bound and the tail bounds established above for $r_*(t,x)$,
\[\P(r_*(U) \geq R ) \leq C(1+\textup{diam}(U))^{d+1} \exp(-c M^{-2(d-1)}|\log M|^{-C}R^{\beta'}).\]
Let $U$ and $U'$ Borel sets with diameter at most $D$ and call $R = \frac{1}{3\sqrt{d+1}}d(U,U')$. Then $U+Q_R$ and $U'+Q_R$ have distance apart at least $\frac{1}{3}d(U,U')$ and, by the mixing condition of $V$
\begin{align*}
 \alpha(\mathcal{F}(U+Q_R),\mathcal{F}(U'+Q_R)) &\leq \alpha(R,D) \\
 &\leq C(1+D+2R)^C \exp(-c R^{\beta}) \\
 &\leq C(1+D)^C \exp(-cR^{\beta'})
 \end{align*}
where the constants $c,C$ may have changed in the last inequality and we used $\beta' < \beta$ to absorb the $C|\log R|$ term in the exponential.

Let random variables $X \in \sigma(r_*|_{U})$ and $Y \in \sigma(r_*|_{U'})$ of the form (abusing notation) $X = X(r_*(t_1,x_1),\dots,r_*(t_n,x_n))$ for some $X: \R^n \to \R$ with $|X| \leq 1$ and points $(t_j,x_j) \in U$, and $Y = Y(r_*(s_1,y_1),\dots,r_*(s_m,y_m))$ for some $Y: \R^m \to \R$ with $|Y| \leq 1$ and points $(s_,y_j) \in U'$.  Call $X^R$ and $Y^R$ to be the same functions applied to the localized $r_*^R$ at the same points.  Then $X$ and $X^R$ agree on $\{r_*(U) < R\}$, and $Y$ and $Y^R$ agree on $\{r_*(U') < R\}$.
\begin{align*}
 |\E[XY] -\E[ X]\E[Y]|   
 &\leq  |\E[ X^RY^R]-\E[ X^R]\E[Y^R]|+|\E[ XY] - \E[ X^RY^R]|\\
 &\quad \quad \quad + | \E[ X^R]\E[Y^R]-\E[X]\E[Y]| \\
 &\leq  \alpha(R,D)+4\P(r_*(U)>R) + 4\P(r_*(U')>R)\\
 &\leq  C(1+D)^C \exp(-c M^{-2(d-1)}|\log M|^{-C}d(U,U')^{\beta'})
 \end{align*}
 This establishes the $\alpha$-mixing rate for the field $T = r_*$.

  \bibliographystyle{plain}
\bibliography{geqn_art}
\end{document}